\newtheorem{theorem}{Theorem}[section]
\newtheorem{lemma}[theorem]{Lemma}
\newtheorem{conjecture}[theorem]{Conjecture}
\newtheorem{problem}[theorem]{Problem}
\newtheorem{claim}{Claim}
\theoremstyle{remark}
\newtheorem{remark}[theorem]{Remark}
\newcommand{\hF}{\mathcal{F}}
\newcommand{\hG}{\mathcal{G}}
\newcommand{\extr}[2]{\mathcal{F}(#1,#2)}
\newcommand{\mextr}[2]{\mathcal{M}(#1,#2)}
\newcommand{\disj}[1]{\textrm{DG}(#1)} 
\title{Hypergraphs of Bounded Disjointness}
\author{Alex Scott}
\address{Mathematical Institute, 24-29 St Giles', Oxford, OX1 3LB, UK}
\email{scott@maths.ox.ac.uk}
\author{Elizabeth Wilmer}
\address{Department of Mathematics, Oberlin College, Oberlin, OH, 44074, USA}
\email{ewilmer@oberlin.edu}
\thanks{Support for the second author provided by the Great Lakes College Association as part of its New Directions Initiative, made possible by a grant from the Andrew W. Mellon Foundation.}
\date{\today}
\begin{document}

\begin{abstract}
A $k$-uniform hypergraph is $s$-almost intersecting if every edge is disjoint from exactly $s$ other edges.  Gerbner, Lemons, Palmer, Patk\'os and Sz\'ecsi conjectured that for every $k$, and $s>s_0(k)$, every $k$-uniform $s$-almost intersecting hypergraph has at most $(s+1)\binom{2k-2}{k-1}$ edges.  We prove a strengthened version of this conjecture and determine the extremal graphs.  We also give some related results and conjectures.
\end{abstract}

\maketitle


\section{Introduction}\label{sec:intro}

A $k$-uniform hypergraph $\hF$ is \textit{intersecting} if $A\cap B$ is nonempty for all edges $A,B\in\hF$.
Erd\H os, Ko and Rado \cite{EKR} showed that, for $n\ge 2k$, every
$k$-uniform intersecting hypergraph $\hF\subset \binom{[n]}{k}$ has size at most $\binom{n-1}{k-1}$; equality holds for the hypergraph of all $k$-sets containing a fixed element.

It is natural to vary the intersection condition and look at hypergraphs in which some pairs of edges are allowed to be disjoint. A number of authors have addressed the global problem of minimizing the number of disjoint pairs in a hypergraph of given size and order (see Frankl \cite{F77}, Ahlswede \cite{A80}, Ahslwede and Katona \cite{AK78}, Bollob\'as and Leader \cite{BL03}, Das, Gan and Sudakov \cite{DGS}). This paper examines the local version of this question introduced by Gerbner,
Lemons,   
Palmer,
Patk{\'o}s
and Sz{\'e}csi~\cite{GLPPS12}, where each edge is disjoint from a bounded number of other edges. 

Following~\cite{GLPPS12}, we define a hypergraph $\hF$ to be
\textit{$(\le s)$-almost intersecting} if for all $A \in \hF$, there are at most $s$ sets $B \in \hF$ satisfying $A \cap B = \emptyset$, and 
\textit{$s$-almost intersecting} if for all $A \in \hF$, there are exactly $s$ sets $B \in \hF$ satisfying $A \cap B = \emptyset$. 
More generally, let us also say that $\hF$ is \textit{$[a,b]$-almost intersecting} if for all $A \in \hF$
\begin{equation}
a \leq \left|\{B \in \hF : A \cap B = \emptyset\}\right| \leq b. \notag
\end{equation}

The maximum size of a $k$-uniform $(\leq\!\!{s})$-almost intersecting hypergraph was investigated in~\cite{GLPPS12}, where it was shown that  the Erd\H os-Ko-Rado bound continues to hold provided 
$n>n_0(k,s)$.  By contrast, it was also shown in~\cite{GLPPS12} that the maximum size of a $k$-uniform $s$-almost intersecting hypergraph does not grow with the size of the ground set:
every $k$-uniform $s$-almost intersecting hypergraph has 
at most $s\binom{2ks}{ks}$ edges.    Gerbner, Lemons, Palmer, P{\'a}lv{\"o}lgyi, 
Patk{\'o}s and Sz{\'e}csi \cite{GLPPPS13} subsequently improved this bound to 
$(2s-1)\binom{2k}{k}$.

An example of a large $k$-uniform $s$-almost intersecting hypergraph is given by the family
\begin{equation}
\extr{k}{s} = \left\{A \cup \{j\} : A \in \binom{[2k-2]}{k-1},\ j \in \{2k-1,2k,\dots,2k+s-1\} \right\}, \notag
\end{equation} 
which has $(s+1)\binom{2k-2}{k-1}$ edges.
In \cite{GLPPS12}, Gerbner,
Lemons,   
Palmer,
Patk{\'o}s
and Sz{\'e}csi conjecture that for every $k$ and $s>s_0(k)$, this is the maximal size of any $s$-almost intersecting $k$-uniform hypergraph.

We prove this conjecture in Section \ref{sec:simple}.  In fact, we prove a rather stronger result: we show that for every $k\ge2$ there are $R=R(k)$ and $s_0(k)$ such that, for $s>s_0$, every 
$k$-uniform $[R,s]$-almost intersecting hypergraph has at most $(s+1)\binom{2k-2}{k-1}$ edges. 
We also determine all the extremal hypergraphs.  Among the extremal hypergraphs, the family $\extr{k}{s}$ minimizes the number of elements in the base set.

The bound on $R$ that we obtain is rather large, as our argument depends on an application of the Sunflower Lemma of Erd\H os and Rado.  It seems likely that something much smaller would suffice: in fact, we conjecture that for sufficiently large $s$,  $R=1$ is enough.  Note that we cannot take $R=0$, as there are intersecting $k$-uniform hypergraphs of unbounded size (and an intersecting hypergraph is automatically $[0,s]$-almost intersecting).  
However, in Section~\ref{sec:reduced}, we consider the effect of weak disjointness assumptions.
In particular, for the cases $k=2$ and $k=3$, we show that a single pair of disjoint edges suffices to recover the bound $(s+1)\binom{2k-2}{k-1}$ on the number of edges, and that we get the same family of extremal hypergraphs. 
(We remark that \cite{GLPPS12} fully characterizes extremal $s$-almost intersecting graphs for $k=2$ and all~$s$.)

We also prove sharp bounds for
\textit{multihypergraphs}, that is, uniform set systems in which repeated edges are allowed.
As in the hypergraph case, there are $[0,s]$-almost intersecting systems of unbounded size.
In Section~\ref{sec:multi}, we prove that the family
$\mextr{k}{s}$ consisting of $\binom{[2k]}{k}$, with each edge having multiplicity $s$, is the unique extremal examples over $k$-uniform multihypergraphs for the property of being $[1,s]$-almost intersecting.  Note that in the large $k$ and large $s$ limit, $s\binom{2k}{k}$ is about four times as large as $(s+1)\binom{2k-2}{k-1}$.

Finally, in Section~\ref{sec:discuss}, we discuss our results and raise some further questions.

We conclude this section with some definitions. We write $[n]$ for the set $\{1,\dots,n\}$ and $\binom{S}{j}$ for the set of all $j$-element subsets of a set $S$. 
Given a hypergraph $\hF$, its \textit{disjointness graph} $\disj{\hF}$ has vertex set equal to $\hF$, and $A \sim B$ in $\disj{\hF}$ exactly when $A \cap B = \emptyset$. Note that $\hF$ is $[a,b]$-almost intersecting exactly when the minimal and maximal vertex degrees  in $\disj{\hF}$ satisfy $a \leq \delta(\disj{\hF}) \leq \Delta(\disj{\hF}) \leq b$. 
The definitions of \textit{$[a,b]$-almost intersecting} and \textit{disjointness graph} extend directly to \textit{multihypergraphs}, that is, uniform set systems in which repeated edges are allowed. (In the disjointness graph, multiple copies of a single edge correspond to distinct vertices.)


\section{Multihypergraphs}\label{sec:multi}

First, we fully characterize the extremal behavior in the multihypergraph case. Recall that $\mextr{k}{s}$ is the multihypergraph consisting of $\binom{[2k]}{k}$, where each edge occurs with multiplicity $s$. Its disjointness graph consists of $\frac{1}{2}\binom{2k}{k}$ copies of the complete bipartite graph $K_{s,s}$. 

\begin{theorem}\label{thm:multi} For $s \geq 1$, any $k$-uniform $[1,s]$-almost intersecting multihypergraph has at most $s \binom{2k}{k}$ edges.  

The unique multihypergraph achieving this bound is $\mextr{k}{s}$, which is $s$-almost intersecting. 
\end{theorem}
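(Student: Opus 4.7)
The plan is to proceed in two main stages.

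\textbf{Stage 1 (Multiplicity bound).} I would first show that every distinct $k$-set $X$ appearing in $\hF$ satisfies $m(X) \leq s$, where $m(X)$ denotes the multiplicity of $X$. Since $\hF$ is $[1,s]$-almost intersecting, $X$ has at least one disjoint partner $Y \in \hF^*$ (the underlying hypergraph). Every one of the $m(X)$ copies of $X$ is disjoint from any fixed copy of $Y$, so $m(X) \leq D(Y) \leq s$.

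\textbf{Stage 2 (Pair-based structural argument).} Fix an arbitrary disjoint pair $\{A, B\} \subseteq \hF$ and set $V = A \cup B$, a $2k$-element set. Within $\binom{V}{k}$, the $\binom{2k}{k}$ subsets partition into $\binom{2k}{k}/2$ complementary pairs $\{X, V \setminus X\}$. In the ``clean'' case where every edge of $\hF$ is contained in $V$, the argument closes: for each $X \subseteq V$ with $X \in \hF^*$, the unique $k$-subset of $V$ disjoint from $X$ is $V \setminus X$, so $V \setminus X \in \hF^*$ (because $D^*(X) \neq \emptyset$), and $D(X) = m(V \setminus X) \leq s$. Each complementary pair then contributes $m(X) + m(V\setminus X) \leq 2s$, and summing over the $\binom{2k}{k}/2$ pairs gives $|\hF| \leq s \binom{2k}{k}$. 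Equality forces $m(X) = m(V\setminus X) = s$ for every complementary pair, which is exactly $\hF = \mextr{k}{s}$.

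\textbf{Main obstacle (external edges).} The hard part is handling edges $C \in \hF$ with $C \not\subseteq V$, which can occur for small non-extremal examples. My approach would be a capacity-competition argument: each external $C$ that is disjoint from some internal $X \subseteq V$ contributes to $D(X)$, consuming quota that would otherwise be available for $m(V \setminus X)$; careful accounting should show that such external edges trade off against internal complementary capacity, so the bound $|\hF| \leq s\binom{2k}{k}$ is preserved while equality rules out external edges. An alternative is via LP duality on the fractional relaxation
\[
\max \sum_{X \in \hF^*} m(X) \quad \text{subject to} \quad \sum_{Y \in D^*(X)} m(Y) \leq s \text{ for all } X \in \hF^*, \quad m \geq 0,
\]
where one exhibits a dual feasible solution of value $\binom{2k}{k}$ (e.g., unit weights on a canonical complementary-closed collection of $k$-subsets of some $2k$-set), forcing the primal optimum to be at most $s\binom{2k}{k}$. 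Formalizing either route in the presence of arbitrary external edges—tracking how outside disjointness contributions interact with the degree cap $D(X) \leq s$—is the main technical challenge.
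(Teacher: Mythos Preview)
Your Stage~1 and the ``clean case'' of Stage~2 are fine, but the clean case is essentially trivial and the external-edge case is the entire problem. Neither of your two suggested routes closes the gap.

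For the capacity-competition idea: fixing a single $2k$-set $V=A\cup B$ gives no control over edges that live mostly or entirely outside $V$. An external edge $C$ may be disjoint from \emph{many} internal $X\subseteq V$, or from \emph{none}; and external edges can be disjoint from one another without touching any internal quota at all. There is no natural charging scheme from external edges to internal complementary pairs, and you have not proposed one.

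For the LP route: your relaxation drops the lower-bound condition ``$\ge 1$ disjoint partner'', so as written the LP is unbounded (take $\hF^*$ intersecting and let multiplicities blow up). Even restricting to supports $\hF^*$ in which every set has a disjoint partner, your proposed dual --- unit weights on the $k$-subsets of some fixed $2k$-set $V$ --- is not feasible: the dual variables are indexed by $\hF^*$, so you may only weight sets that actually occur, and there is no reason every $Y\in\hF^*$ is disjoint from some $k$-subset of $V$ lying in $\hF^*$. Producing a dual solution of total weight at most $\binom{2k}{k}$ is exactly the statement that $\hF^*$ can be covered by $\binom{2k}{k}$ disjointness-neighbourhoods $\Gamma(A_i)$ with $A_i\in\hF$, which is the substantive content of the theorem, not a formality.

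The paper's proof supplies precisely this missing ingredient via a global greedy procedure: iteratively choose $B_i$ among the surviving edges, pick any $A_i\in\Gamma(B_i)$, and delete $\Gamma(A_i)$. The resulting pairs $(A_i,B_i)$ satisfy $A_i\cap B_i=\emptyset$ and $A_i\cap B_j\ne\emptyset$ for $i<j$, so the skew Bollob\'as inequality bounds the number of steps by $\binom{2k}{k}$; since each step removes at most $s$ edges, $|\hF|\le s\binom{2k}{k}$. In LP language, this algorithm \emph{constructs} the integral dual witness whose existence you were hoping to assert. Uniqueness then follows by exploiting the freedom in the greedy choices together with the (non-skew) Bollob\'as theorem. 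Your outline is missing this step and, without an analogue of skew Bollob\'as, does not yield a proof.
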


We will use two classical theorems from extremal set theory.  The first is the Bollob\'as theorem on intersections between pairs of sets.

\begin{theorem} \cite{Bollobas65}\label{thm:ci} 
Let $(A_1,B_1),\dots,(A_m,B_m)$ 
be a sequence of pairs of sets with $|A_i|=a$ and $|B_i|=b$ for every  $i$.
If
\begin{enumerate}
\item $A_i \cap B_i = \emptyset$ for $1 \leq i \leq m$, and
\item $A_i \cap B_i \not= \emptyset$ for $1 \leq i , j \leq m$,
\end{enumerate}
then $m \leq \binom{a+b}{b}$.  Furthermore, if $m=\binom{a+b}{a}$ then there is some set $S$ of cardinality $a+b$ such that
the $A_i$ are all subsets of $S$ of size $a$, and $B_i=S\setminus A_i$ for each $i$.
\end{theorem}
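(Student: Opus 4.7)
The plan is to prove this by the classical random-permutation argument due to Bollob\'as. Let $U = \bigcup_{i=1}^m (A_i \cup B_i)$ and let $\pi$ be a uniformly random linear ordering of $U$. For each $i$, let $E_i$ be the event that, under $\pi$, every element of $A_i$ precedes every element of $B_i$. Since $A_i \cap B_i = \emptyset$ by hypothesis (1), we have $|A_i \cup B_i| = a+b$, and $E_i$ depends only on the relative ordering of these $a+b$ elements. Among the $(a+b)!$ such orderings, exactly $a!\,b!$ place all of $A_i$ before all of $B_i$, so $\Pr(E_i) = a!b!/(a+b)! = \binom{a+b}{a}^{-1}$.

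The heart of the argument is that the events $E_i$ are pairwise disjoint, whence $\sum_i \Pr(E_i) \leq 1$ forces $m \leq \binom{a+b}{a} = \binom{a+b}{b}$. To check disjointness, suppose for contradiction that $E_i$ and $E_j$ both hold for some $i \neq j$. After swapping the roles of $i$ and $j$ if necessary, assume the last element of $A_i$ under $\pi$ occurs at position at most $p$, where $p$ is the position of the last element of $A_j$. Hypothesis (2) (read as $A_i \cap B_j \neq \emptyset$ for $i \neq j$, the statement as written being an evident typo) supplies some $z \in A_i \cap B_j$. Since $z \in A_i$, the position of $z$ is at most the last position of $A_i$, hence at most $p$; but $z \in B_j$ together with $E_j$ forces the position of $z$ to exceed the last position of $A_j$, namely $p$. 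This is a contradiction.

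For the extremal characterization, assume $m = \binom{a+b}{a}$. Then equality holds throughout the chain $1 \geq \Pr(\bigcup_i E_i) = \sum_i \Pr(E_i) = m\binom{a+b}{a}^{-1}$, so the events $E_i$ partition the space of linear orderings of $U$ (up to a null set): every ordering satisfies exactly one $E_i$. The plan is to deduce from this that $|U| = a+b$, after which each $A_i \cup B_i$ must equal $U = S$, the pairs $(A_i,B_i)$ are determined by the distinct $a$-subsets $A_i$ of $S$ (distinctness follows because $A_i = A_j$ would force $B_i = B_j$ and violate hypothesis (2)), and these must exhaust all $\binom{a+b}{a}$ subsets of size $a$. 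To show $|U| = a+b$, I would suppose for contradiction that some $u \in U \setminus (A_1 \cup B_1)$ exists, pick $k$ with $u \in A_k \cup B_k$, and compare the family of orderings satisfying $E_1$ (which is invariant under repositioning $u$) with those satisfying $E_k$, using the partition property to derive a contradiction. This final bookkeeping step is the main obstacle, since it requires tracking how moving a single element between positions interacts simultaneously with several events $E_i$; once handled, the rest of the extremal conclusion is immediate from the disjointness analysis above.
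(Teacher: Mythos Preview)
The paper does not give its own proof of this statement: Theorem~\ref{thm:ci} is Bollob\'as's classical set-pair inequality, quoted from \cite{Bollobas65} and used as a black box. So there is no in-paper argument to compare against; what follows is an assessment of your proof on its own merits.

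Your proof of the inequality $m \le \binom{a+b}{a}$ is correct and is the standard permutation (Lubell-type) argument. You also correctly identify the typo in condition~(2).

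The extremal characterization, however, has a genuine gap. You correctly reduce the problem to showing $|U|=a+b$, and correctly observe that equality forces the events $E_i$ to partition the set of orderings. But the step you flag as ``bookkeeping'' is in fact the substantive part. Conditioning on $u$ being in the first or last position only yields the single relation
\[
b\cdot\bigl|\{i:u\in A_i\}\bigr| \;=\; a\cdot\bigl|\{i:u\in B_i\}\bigr|,
\]
which is perfectly consistent with $u\notin A_1\cup B_1$ and does not by itself produce a contradiction; repeating the same conditioning at other single positions gives no new independent constraint. To finish along these lines you would need to analyse, for each orbit of orderings under moving $u$, the full threshold structure (which $E_k$ holds for which positions of $u$) and then count carefully---this can be done, but it is a real argument, not a bookkeeping afterthought. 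Alternatively, the equality case is more cleanly handled by other standard proofs of Bollob\'as's theorem (for instance an induction on $a+b$, or the linear-algebraic proof in which equality corresponds to the associated vectors forming a basis). As written, the extremal half is a plausible sketch with its key step missing.
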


We will also need the skew version of this theorem (see Frankl \cite{F82}, Kalai \cite{K84}, Lov\'asz \cite{L77}).

\begin{theorem}\cites{F82,K84,L77}\label{thm:wci} 
Let $(A_1,B_1),\dots,(A_m,B_m)$ 
be a sequence of pairs of sets with $|A_i|=a$ and $|B_i|=b$ for every  $i$.
If
\begin{enumerate}
\item $A_i \cap B_i = \emptyset$ for $1 \leq i \leq m$, and
\item $A_i \cap B_i \not= \emptyset$ for $1 \leq i < j \leq m$,
\end{enumerate}
then $m \leq \binom{a+b}{b}$.
\end{theorem}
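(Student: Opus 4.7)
The plan is to prove Theorem~\ref{thm:wci} via the standard exterior-algebra method originating with Lov\'asz, which handles the skew case as naturally as the symmetric one. The strategy represents each pair $(A_i,B_i)$ by a pair of decomposable multivectors, one in $\Lambda^a\R^{a+b}$ and one in $\Lambda^b\R^{a+b}$; the disjointness hypothesis (1) will produce a ``diagonal'' nonvanishing, and the skew intersection hypothesis (2) will produce a triangular vanishing.

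Concretely, let $N$ be the size of the ground set $\bigcup_i(A_i\cup B_i)$, and choose vectors $v_1,\dots,v_N\in\R^{a+b}$ in general position, so that any $a+b$ of them are linearly independent. A convenient choice is to place each element on the moment curve, $v_x=(1,t_x,\dots,t_x^{a+b-1})$ for distinct reals $t_x$, where Vandermonde determinants make general position immediate. Fix an arbitrary ordering of each $A_i$ and define $\alpha_i\in\Lambda^a\R^{a+b}$ as the corresponding wedge product of the $v_x$; define $\beta_i\in\Lambda^b\R^{a+b}$ analogously from $B_i$. Because $\Lambda^{a+b}\R^{a+b}$ is one-dimensional, each $\alpha_i\wedge\beta_j$ may be identified with a scalar. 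Hypothesis (1), together with general position, gives $\alpha_i\wedge\beta_i\ne 0$, while hypothesis (2) forces $\alpha_i\wedge\beta_j=0$ whenever $i<j$, since then $A_i\cup B_j$ contains a repeated element and the wedge collapses.

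With these two properties in place, linear independence of $\alpha_1,\dots,\alpha_m$ in $\Lambda^a\R^{a+b}$ follows by a short triangular argument: given a dependence $\sum c_i\alpha_i=0$, let $j$ be the largest index with $c_j\ne 0$ and wedge with $\beta_j$; the terms $i<j$ vanish by (2), the terms $i>j$ vanish by the choice of $j$, and we are left with $c_j(\alpha_j\wedge\beta_j)\ne 0$, a contradiction. Therefore $m\le\dim\Lambda^a\R^{a+b}=\binom{a+b}{a}=\binom{a+b}{b}$, as claimed.

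The main conceptual step is to choose the ambient dimension to be \emph{exactly} $a+b$: small enough that the top exterior power is one-dimensional (so that $\alpha_i\wedge\beta_j$ becomes a scalar and the triangular pattern is a scalar-valued matrix) and just large enough that disjoint sets of sizes $a$ and $b$ yield a nonzero top form. Once this and the general-position choice are made, the skew hypothesis $i<j$ is exactly what the triangularity argument needs, and no further extremal combinatorics is required; unlike the symmetric version, there is no equality case to analyze.
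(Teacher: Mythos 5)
Your proof is correct: the paper states Theorem~\ref{thm:wci} without proof, citing \cite{F82}, \cite{K84} and \cite{L77}, and your exterior-algebra argument (general-position vectors on the moment curve, $\alpha_i\wedge\beta_i\neq 0$ from disjointness, $\alpha_i\wedge\beta_j=0$ for $i<j$ from the skew intersection hypothesis, and the triangular independence argument in $\Lambda^a\R^{a+b}$) is precisely the standard proof from those sources, so it matches the approach the paper relies on. One small point: you correctly read condition (2) as $A_i\cap B_j\neq\emptyset$ for $i<j$ (the statement as printed repeats $B_i$, which is evidently a typo), and your observation that no equality analysis is needed here, unlike in Theorem~\ref{thm:ci}, is consistent with the paper's remark that the skew version has no unique extremal configuration.
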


Note that the assumptions in Theorem~\ref{thm:wci} are weaker than in Theorem~\ref{thm:ci}; however, there is not a unique extremal graph (for instance, we can take $B_1$ to be empty).

We are now ready to proceed with the proof of Theorem \ref{thm:multi}.

\begin{proof}[Proof of Theorem~\ref{thm:multi}] Let $\hF$ be any such multihypergraph, and let $F=\disj{\hF}$ be its disjointness graph (an edge of $\hF$ with multiplicity $c$ is represented by $c$ distinct vertices in $F$).  We know $\delta(F) \geq 1$ and $\Delta(F) \leq s$. For $A \in \hF=V(F)$, let
$\Gamma(A) = \{B \in \hF : A \cap B = \emptyset\}$ be its neighbourhood in $F$. 

We construct a sequence $(A_1,B_1), (A_2,B_2), \dots$  of pairs of vertices of $F$ according to the following procedure, which we will call the \textit{AB algorithm}: set $i=1$ and $V_1=\hF$. Repeat the following steps until $V_{i}=\emptyset$:
\begin{enumerate}
\item Choose $B_i$ arbitrarily from $V_{i}$. 
\item Since $\delta\geq 1$, we know $\Gamma(B_i) \not= \emptyset$. Let $A_i$ be an arbitrary element of $\Gamma(B_i)$. 
\item Set $V_{i+1} = V_i \setminus \Gamma(A_i)$ and increment $i$.
\end{enumerate}

Let $m$ be the length of the resulting sequence of pairs $(A_i,B_i)$. By the construction we immediately have $A_i \cap B_i = \emptyset$ for $i=1,\dots,m$. Since at stage $i$ we eliminate all sets disjoint from $A_i$ as candidates for any future $B_j$, we know $A_i \cap B_j \not= \emptyset$ for $1 \leq i < j \leq m$. The hypotheses of Theorem~\ref{thm:wci} are satisfied, so $m \leq \binom{2k}{k}$.

Since at the $i$-th step in the AB algorithm we eliminate at most $s$ vertices from $V_{i+1}$, we must have  $ |\hF|/s \leq m$. Thus $|\hF|/s \leq \binom{2k}{k}$, and we have proved the first claim in the theorem.

Now assume that $\hF$ is a $[1,s]$-almost intersecting $k$-uniform multihypergraph with exactly $s\binom{2k}{k}$ edges, and apply the AB algorithm to $F=\disj{\hF}$.  The resulting sequences $A_1, A_2,\dots, A_m$ and $B_1, B_2,\dots, B_m$ have length  at most $\binom{2k}{k}$,  and so the algorithm must eliminate exactly $s$ vertices from $V_{i+1}$ at the $i$th step, for every $i$. Since this must hold for every possible sequence of choices iFpetaln the algorithm, $F$ must be $s$-regular, 
and so $\hF$ itself is $[s,s]$-almost intersecting. 

We claim that for $X,Y \in \hF$, either $\Gamma(X) = \Gamma(Y)$ or $\Gamma(X) \cap \Gamma(Y) = \emptyset$. Why? Assume that the edges $X$,$Y$ are a counterexample, so that there exists $Z \in \Gamma(X)\setminus\Gamma(Y)$. Note that there must then exist a vertex $W \in \Gamma(Y)\setminus\Gamma(X)$, since both $\Gamma(X)$ and $\Gamma(Y)$ contain $s$ elements. Consider running the AB algorithm with $B_1=Z$, $A_1=X$, $B_2=W$, and $A_2=Y$. Then $|V_{2}\setminus V_{3}| = s-|\Gamma(X)\cap\Gamma(Y)| <s$, which is impossible (see Figure~\ref{fig:overlaps}). 

\begin{figure}
\begin{center}
\setlength{\unitlength}{0.06cm}
\begin{picture}(80,40)(0,0)
 \multiput(10,10)(10,0){7}{\circle*{4}}
 \multiput(30,30)(20,0){2}{\circle*{4}}
 \multiput(10,10)(20,0){2}{\line(1,1){20}}
  \multiput(20,10)(20,0){2}{\line(1,2){10}}
   \multiput(30,10)(20,0){2}{\line(0,1){20}}
    \multiput(40,10)(20,0){2}{\line(-1,2){10}}
     \multiput(50,10)(20,0){2}{\line(-1,1){20}}
\put(2,0){\mbox{$Z$}}
\put(70,0){\mbox{$W$}}
\put(28,35){\mbox{$X$}}
\put(48,35){\mbox{$Y$}}
\end{picture}
\end{center}
\caption{\label{fig:overlaps} Here $s=5$, but the non-trivial intersection of $\Gamma(X)$ and $\Gamma(Y)$ forces $|V_2-V_3|=2$ when $B_1=Z$, $A_1=X$, $B_2=W$, and $A_2=Y$. }
\end{figure}
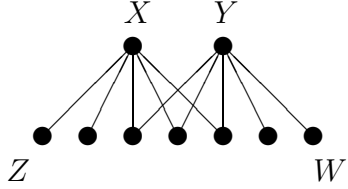

Now let $X$ and $Z$ be two vertices adjacent in $F$. For any $Y \in \Gamma(Z)$, we know $\Gamma(Y) =\Gamma(X)$. That is, every neighbor of $Z$, including $X$, has the same neighbourhood, which must be of size $s$ and contains $Z$. Similarly, every neighbor of $X$ must have the same neighbourhood, which is of size $s$ and contains $X$. We conclude that the connected component containing $X$ and $Z$ in $F$ is isomorphic to the complete bipartite graph $K_{s,s}$, and that $F$ itself consists of $\frac{1}{2} \binom{2k}{k}$ disjoint copies of $K_{s,s}$. 

Let $K= \binom{2k}{k}$, and choose a sequence $F_1G_1,\dots,F_{K/2}G_{K/2}$ of edges of $F$, one from each component.  For each edge $F_iG_i$, we define two pairs $(F_i,G_i)$ and $(G_i,F_i)$: altogether we get $K$ pairs of $k$-sets, and these satisfy the conditions of 
Theorem~\ref{thm:ci}.  It follows that the pairs $\{F_i,G_i\}$ consist of all partitions of some fixed set $S$ of size $2k$ into two sets of size $k$.  Furthermore, replacing any $F_i$ or $G_i$ by a different vertex from (the same vertex class in) the same component must give the same graph.  It follows that each part of each component of $F$ must represent $s$ copies of the same $k$-set.
Thus $\hF$ is actually $\mextr{k}{s}$. 
\end{proof}


\section{Simple hypergraphs}\label{sec:simple}

For simple hypergraphs, we are able to significantly weaken the assumptions of the conjecture made in \cite{GLPPS12}. Although in Theorem~\ref{thm:simple} we assume only that our hypergraphs are $[R,s]$-almost intersecting for some $R>R_0(k)$, we are able to show that the extremal systems are in fact all $s$-almost intersecting. 

Let us describe the extremal families. Fix disjoint sets $A$, $B$ with $|A|=2k-2$ and $|B|\ge s+1$.  Let $f: \binom{A}{k-1} \to \binom{B}{s+1}$ be any map such that $f(S)=f(A\setminus S)$ for every $S\in \binom{A}{k-1}$.  We then define the $k$-uniform hypergraph ${\mathcal M}_f$ by 
\begin{equation}\label{eq:Mdef}
{\mathcal M}_f = \left\{ S\cup \{x\} : S\in \binom{A}{k-1},\  x\in f(S)\right\}.
\end{equation}
Thus ${\mathcal M}_f$ is the union of $\frac 12 \binom{2k-2}{k-1}$
``double stars'' of the form
$\{ S\cup \{x_1\},\dots, S\cup \{x_{s+1}\}, (A\setminus  S)\cup \{x_1\},
\dots, (A\setminus S)\cup \{x_{s+1}\} \}$.

Each edge of ${\mathcal M}_f$ is disjoint from exactly those edges which have the complementary ``core'' in $A$ and a different ``petal'' in $B$; there are $s$ such edges. Hence all ${\mathcal M}_f$ are, in fact, $[s,s]$-almost intersecting, and all have the same disjointness graph: $\frac{1}{2}\binom{2k-2}{k-1}$ copies of $K_{s+1,s+1}$ minus a matching.

The hypergraphs $\extr{k}{s}$ defined in \cite{GLPPS12} correspond to $|B|=s+1$ and $f(X) = B$ for all $X \in \binom{A}{k-1}$; they clearly minimize the size of the ground set over these families.

\begin{theorem}\label{thm:simple} Fix $k>2$. Then there exist constants $R = R(k)$ and $s_0=s_0(k)$ such that when $s>s_0$, 
any $k$-uniform $[R,s]$-almost intersecting hypergraph has at most $(s+1)\binom{2k-2}{k-1}$ edges. 

The only hypergraphs achieving this bound are those of the form ${\mathcal M}_f$ for some $f$. 
\end{theorem}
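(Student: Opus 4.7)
The plan is to reduce $\mathcal{F}$ to something essentially isomorphic to an $\mathcal{M}_f$ and then count. The central task is to use the Sunflower Lemma of Erd\H os and Rado to extract a ``base set'' $A \subset V$ with $|A| = 2k - 2$ such that every edge of $\mathcal{F}$ has the form $S \cup \{x\}$ with $S \in \binom{A}{k-1}$ and $x \notin A$. The constant $R = R(k)$ is chosen large enough that this rigid structure is forced by repeated applications of the Sunflower Lemma.

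Once such an $A$ has been located, the counting is short. For each $S \in \binom{A}{k-1}$, set $\mathcal{F}_S = \{E \in \mathcal{F} : E \cap A = S\}$, write each $E \in \mathcal{F}_S$ as $S \cup \{x_E\}$, and let $f(S) = \{x_E : E \in \mathcal{F}_S\}$. Since $|A| = 2(k-1)$, two edges $S \cup \{x\}$ and $T \cup \{y\}$ of this form are disjoint exactly when $T = A \setminus S$ and $y \ne x$; hence an edge $E = S \cup \{x\} \in \mathcal{F}_S$ is disjoint from exactly $|f(A \setminus S)| - \mathbf{1}[x \in f(A \setminus S)]$ other edges of $\mathcal{F}$. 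The upper bound $s$ on disjointness degrees forces $|\mathcal{F}_{A \setminus S}| \le s+1$, while $R \ge 1$ forces $\mathcal{F}_{A \setminus S}$ to be non-empty whenever $\mathcal{F}_S$ is. Summing over the $\tfrac{1}{2}\binom{2k-2}{k-1}$ unordered pairs $\{S, A \setminus S\}$, each contributing at most $2(s+1)$ edges, yields $|\mathcal{F}| \le (s+1)\binom{2k-2}{k-1}$. Equality forces every pair to be used, $|\mathcal{F}_S| = s+1$ for every $S$, and saturation of the upper bound forces $f(S) \subseteq f(A \setminus S)$ for every $S$; by symmetry $f(S) = f(A \setminus S)$, recovering the $\mathcal{M}_f$ structure.

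The main obstacle is extracting the base set $A$ in the first step. I would try to apply the Sunflower Lemma to $\mathcal{F}$ itself (or to the family of edges disjoint from some fixed edge) to find a sunflower with core $S$ of size $k - 1$; then, using the $[R, s]$-condition and the induced disjointness constraints, locate a disjoint $(k-1)$-set $S'$ playing the symmetric role, and set $A = S \cup S'$. Ruling out ``rogue'' edges with $|E \cap A| \le k - 2$ should follow from the degree bound itself: for such an $E$, every $T \in \binom{A \setminus E}{k-1}$ with $\mathcal{F}_T \ne \emptyset$ contributes roughly $|f(T)|$ disjoint edges to $E$, and one expects the disjointness degree of $E$ to grow like $\Omega(ks)$, contradicting the upper bound $s$ once $s > s_0(k)$. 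Pinning down $A$ via the Sunflower Lemma without knowing the $\mathcal{M}_f$ structure in advance, and choosing $R(k)$ and $s_0(k)$ so that both the extraction of $A$ and the elimination of rogues go through, is the delicate technical heart of the proof.
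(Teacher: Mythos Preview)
Your endgame is correct: once a $(2k-2)$-element set $A$ is found with every edge of the form $S\cup\{x\}$ for $S\in\binom{A}{k-1}$, the counting and the characterisation of equality go exactly as you describe.

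The gap is that ``extracting $A$'' is not a preliminary step but essentially the entire proof, and your sketch for it does not work as stated. A single application of the Sunflower Lemma gives you a sunflower with \emph{some} core $C$; nothing forces $|C|=k-1$, and the lemma gives no control over this. Even granting a $(k-1)$-element core $S$, your proposal to ``locate a disjoint $(k-1)$-set $S'$ playing the symmetric role'' is unexplained, and your rogue-elimination argument is circular: it presumes that many $\mathcal{F}_T$ with $T\subset A$ are already known to be large, which is exactly what you are trying to establish.

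The paper's route is structurally different. It repeatedly applies the Sunflower Lemma to decompose \emph{all} of $\mathcal{F}$ (up to fewer than $R$ leftovers) into sunflowers with $r$ petals each, then forms the $(k-1)$-uniform \emph{core multihypergraph} $\mathcal{G}$ of their cores (padded if necessary). A short computation shows $\mathcal{G}$ is $[1,t]$-almost intersecting with $t\approx s/r$, so the already-proved multihypergraph bound (Theorem~\ref{thm:multi}) applies to $\mathcal{G}$. A stability analysis of the AB algorithm on $\disj{\mathcal{G}}$ then shows that $\disj{\mathcal{G}}$ is a disjoint union of $\tfrac12\binom{2k-2}{k-1}$ nearly-balanced complete bipartite graphs, and the equality case of Bollob\'as's set-pair theorem (Theorem~\ref{thm:ci}) identifies the $(2k-2)$-element set $A$. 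Only after $A$ is pinned down in this way does one return to $\mathcal{F}$ and run the rogue-elimination argument you describe. The reduction to a $(k-1)$-uniform multihypergraph and the appeal to Bollob\'as's theorem are the ideas your outline is missing.
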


\begin{remark} Note that the extremal $[R,s]$-almost intersecting hypergraphs are, in fact, $s$-almost intersecting. 
\end{remark}

\begin{remark} Of course \cite{GLPPS12} covers $k=2$ completely for $s$-almost intersecting hypergraphs. In Section~\ref{sec:reduced} below, we discuss $[1,s]$-almost intersecting hypergraphs in the $k=2,3$ cases. 
\end{remark}

In general, a $\textit{sunflower}$ with $r$ \textit{petals} and \textit{core} $C$ is a collection of sets $Y_1,\dots, Y_r$ such that $Y_i \cap Y_j = C$ for all $i\not=j$.  The disjoint sets $Y_i-C$  are called the \textit{petals}, and they are not allowed to be empty, although the center $C$ can be. (Note that the ``stars'' in $\mathcal{M}_f$ are in fact sunflowers with one-element petals and $(k-1)$-element cores.) The key fact about sunflowers, which we will use in the proof of Theorem~\ref{thm:simple}, is the following classical result of Erd\H{o}s and Rado \cite{ERsunflower}:

\begin{theorem}[Erd\H{o}s-Rado Sunflower Lemma]\label{th:sunflower} Fix $r,k \geq 1$. Any $k$-uniform hypergraph $\hF$ satisfying $|\hF|>k!(r-1)^k$ contains a sunflower with $r$ petals. 
\end{theorem}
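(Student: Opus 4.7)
The plan is to prove the Sunflower Lemma by induction on the uniformity $k$, using a maximal matching / pigeonhole / link argument that is standard for this result.

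For the base case $k=1$, a $1$-uniform hypergraph is just a family of singletons, and any $r$ distinct singletons form a sunflower with empty core and one-element petals. Since $k!(r-1)^k = r-1$ when $k=1$, the hypothesis $|\hF|>r-1$ gives us $r$ such singletons, as required.

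For the inductive step, assume the lemma holds for $(k-1)$-uniform hypergraphs, and let $\hF$ be $k$-uniform with $|\hF|>k!(r-1)^k$. I would first take a maximal collection $F_1,\dots,F_t$ of pairwise disjoint edges in $\hF$. If $t\ge r$, then $F_1,\dots,F_r$ is already a sunflower with empty core and we are done. Otherwise $t\le r-1$, and by maximality every edge of $\hF$ meets $T=F_1\cup\cdots\cup F_t$, a set of size at most $k(r-1)$. By pigeonhole there is some element $x\in T$ contained in at least
\begin{equation}
\frac{|\hF|}{|T|} > \frac{k!(r-1)^k}{k(r-1)} = (k-1)!(r-1)^{k-1} \notag
\end{equation}
edges of $\hF$.

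Now pass to the link $\hF_x=\{F\setminus\{x\}:F\in\hF,\ x\in F\}$, which is a $(k-1)$-uniform hypergraph with more than $(k-1)!(r-1)^{k-1}$ edges. By the inductive hypothesis, $\hF_x$ contains a sunflower $Y_1,\dots,Y_r$ with core $C$; adjoining $x$ to each petal yields $Y_1\cup\{x\},\dots,Y_r\cup\{x\}$, a sunflower in $\hF$ with core $C\cup\{x\}$ and the same nonempty petals $Y_i\setminus C$. This completes the induction. The only mildly delicate point is checking the arithmetic in the pigeonhole step to confirm that the exponent drops correctly from $k$ to $k-1$ as required by the inductive bound; everything else is bookkeeping.
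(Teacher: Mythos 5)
Your proof is correct, and it is the classical Erd\H{o}s--Rado argument: induction on $k$ via a maximal matching, pigeonhole over the covered vertices, and passing to the link of a popular vertex, with the arithmetic $k!(r-1)^k/(k(r-1))=(k-1)!(r-1)^{k-1}$ checked correctly (and the degenerate cases $k=1$, $t\ge r$ handled). The paper does not prove this lemma itself but simply cites Erd\H{o}s and Rado, so your argument is exactly the standard proof behind the cited result.
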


The following lemma assures us that extremal examples for Theorem~\ref{thm:simple} avoid a particular kind of pathology. Note that it's simply not true for $k=2$, as the complete bipartite graph $K_{2,s+1}$ contains disjoint edges. 

\begin{lemma}\label{lem:nokdisjoint} Fix $k>2$.  For $s>k^k$, 
no $k$-uniform $[0,s]$-almost intersecting hypergraph with at least $(s+1)\binom{2k-2}{k-1}$ edges contains $k$ mutually disjoint edges.
\end{lemma}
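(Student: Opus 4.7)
The plan is to fix a $k$-uniform $[0,s]$-almost intersecting hypergraph $\mathcal{F}$ that contains $k$ mutually disjoint edges $E_1,\dots,E_k$ and bound $|\mathcal{F}|$ directly, showing $|\mathcal{F}|<(s+1)\binom{2k-2}{k-1}$. Every edge $F\in\mathcal{F}$ either meets all of $E_1,\dots,E_k$ (call such an $F$ a \emph{transversal}) or misses at least one of them, so I would bound each class separately.

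For the transversals: since the $E_i$ are pairwise disjoint and $|F|=k$, an edge that meets all $k$ of the $E_i$ must take exactly one element from each, so there are at most $k^k$ transversals. For the non-transversals: each lies in $\Gamma(E_i)=\{G\in\mathcal{F}:G\cap E_i=\emptyset\}$ for some $i$, and the $[0,s]$-almost intersecting hypothesis gives $|\Gamma(E_i)|\le s$. (Note that the $E_i$ themselves are non-transversals, absorbed into this count.) Summing over $i$ yields at most $ks$ non-transversals, so
\begin{equation}
|\mathcal{F}|\le k^k + ks. \notag
\end{equation}

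It remains to verify the arithmetic inequality $(s+1)\binom{2k-2}{k-1}>k^k+ks$ for $k>2$ and $s>k^k$. A quick check gives $\binom{2k-2}{k-1}\ge k+1$ for all $k\ge 3$ (both sides are monotonic and the base case $k=3$ reads $6\ge 4$), so
\begin{equation}
(s+1)\binom{2k-2}{k-1} - (k^k+ks) \ge (s+1)(k+1) - k^k - ks = s+k+1-k^k > 0 \notag
\end{equation}
whenever $s>k^k$. The restriction $k>2$ is genuinely necessary: for $k=2$ one has $\binom{2}{1}=2<3=k+1$, and indeed $K_{2,s+1}$ is a $[0,s]$-almost intersecting graph with $2(s+1)$ edges that contains a disjoint pair, consistent with the remark preceding the lemma.

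I do not anticipate any substantive obstacle; the proof is really just the one-line combinatorial observation that a $k$-set can hit $k$ pairwise disjoint $k$-sets only by picking exactly one element from each, which forces the transversal count to be at most $k^k$ and hence negligible compared to $(s+1)\binom{2k-2}{k-1}$ once $s$ exceeds $k^k$.
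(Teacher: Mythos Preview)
Your proof is correct and essentially identical to the paper's: both arguments split edges into transversals of $E_1,\dots,E_k$ (at most $k^k$ of them) and edges missing some $E_i$ (at most $ks$ by the $[0,s]$ hypothesis), then finish with the same arithmetic $(s+1)(k+1)-k^k-ks=s+k+1-k^k>0$. The paper phrases the second step as a degree-sum contradiction in the disjointness graph, while you phrase it as a direct upper bound on $|\mathcal{F}|$, but these are the same computation.
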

\begin{proof}[Proof of Lemma~\ref{lem:nokdisjoint}] Assume, to the contrary, that $\hF$ is a $k$-uniform $[0,s]$-almost intersecting hypergraph containing mutually disjoint edges $X_1,\dots, X_k$. There are at most $k^k$ edges $Y \in \hF$ such that $Y \cap X_i \not= \emptyset$ for all $i \in [k]$. All other edges in $\hF$ are disjoint from at least one of the $X_i$. Look at degrees in $F=\disj{\hF}$: we must have
\begin{equation}
d_F(X_1)+\dots+d_F(X_k)  \geq (s+1)\binom{2k-2}{k-1}-k^k 
\ge (s+1)(k+1)-k^k, \notag
\end{equation}
since $\binom{2k-2}{k-1}>k+1$ for $k>2$. But then
$d_F(X_1)+\dots+d_F(X_k) > ks$, contradicting
$\Delta(F)\le s$.
\end{proof}

\begin{proof}[Proof of Theorem~\ref{thm:simple}] It will be convenient to introduce a new parameter $r$ and define $R=k^kr^k$. It is then enough to prove that there are functions $r_0(k)$ and $s_0(k,r)$ such that if $r>r_0(k)$ and $s>s_0(k,r)$, then every 
$k$-uniform $(R,s)$-almost intersecting hypergraph has at most $(s+1)\binom{2k-2}{k-1}$ edges. We will find such functions $r_0$ and $s_0$ in the course of the proof. 

Let $\hF$ be a $k$-uniform $[R,s]$-almost intersecting hypergraph with at least $(s+1)\binom{2k-2}{k-1}$ edges.  By repeatedly applying Theorem~\ref{th:sunflower} until too few edges are left to satisfy its hypotheses, we can decompose $\hF$ into a union of $\left\lceil \frac{|\hF|-R}{r}\right\rceil$ sunflowers with $r$ petals each, together with a collection of fewer than $R$ leftover edges. Note that a single core might appear in many sunflowers; however, by Lemma~\ref{lem:nokdisjoint}, none of the sunflowers can have an empty core. Build the $k-1$-uniform \textit{core multihypergraph} $\hG$ by taking the cores of these sunflowers to be edges; if any core has fewer than $k-1$ elements, pad it with new dummy elements (distinct for each edge, so as to introduce no new intersections) to raise the cardinality to $k-1$.  

\begin{claim}\label{cl:talmost}
$\hG$ is $[1,t]$-almost intersecting, where $t = \frac{rs}{(r-k)^2} \geq \frac{s}{r}$.  
\end{claim}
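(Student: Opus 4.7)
The plan is to prove the bounds $\delta(\disj{\hG})\ge 1$ and $\Delta(\disj{\hG})\le t$ separately. For the lower bound, fix any edge $e$ of $\hG$ and let $\tilde{e}$ denote its unpadded core, so the corresponding sunflower $S_e\subset\hF$ consists of $r$ edges of the form $X=\tilde{e}\cup P$. Pick any such $X$. Since $\delta(\disj{\hF})\ge R$ and strictly fewer than $R$ edges of $\hF$ are leftover, at least one of the $\ge R$ edges disjoint from $X$ lies in some sunflower $S_{e'}$. The containment $\tilde{e}'\subset X'$ then forces $\tilde{e}\cap\tilde{e}'=\emptyset$, and since the dummy elements used when padding are distinct across cores, padded cores intersect iff their originals do; so $e\cap e'=\emptyset$ in $\hG$. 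Lemma~\ref{lem:nokdisjoint} guarantees $\tilde{e}\ne\emptyset$, ruling out the possibility $e'=e$.

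For the upper bound, the crucial step is the following counting lemma: if $e,e'$ are disjoint edges of $\hG$, then at least $(r-k)^2$ pairs $(X,X')\in S_e\times S_{e'}$ satisfy $X\cap X'=\emptyset$. To prove it, I would write $X=\tilde{e}\cup P$ and $X'=\tilde{e}'\cup P'$ and expand
\[
X\cap X' \;=\; (\tilde{e}\cap\tilde{e}')\cup(\tilde{e}\cap P')\cup(P\cap\tilde{e}')\cup(P\cap P'),
\]
the first term being empty. Since $|\tilde{e}'|\le k-1$ and the petals of $S_e$ are pairwise disjoint, at most $k-1$ petals $P$ meet $\tilde{e}'$, leaving at least $r-k+1$ good choices of $P$. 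For such a $P$, set $T:=\tilde{e}\cup P$, so $|T|=k$; each element of $T$ lies in at most one petal of $S_{e'}$, so at most $k$ petals $P'$ meet $T$, leaving at least $r-k$ choices with $P'\cap T=\emptyset$, which is precisely $P'\cap\tilde{e}=P'\cap P=\emptyset$. This yields $(r-k+1)(r-k)\ge(r-k)^2$ disjoint pairs.

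To conclude, suppose $e$ is disjoint in $\hG$ from cores $e_1,\dots,e_m$. The sunflowers $S_{e_1},\dots,S_{e_m}$ are pairwise disjoint subsets of $\hF$, so summing the counting lemma gives
\[
m(r-k)^2 \;\le\; \sum_{X\in S_e}\bigl|\{X'\in\hF : X\cap X'=\emptyset\}\bigr| \;\le\; rs,
\]
using $|S_e|=r$ and $\Delta(\disj{\hF})\le s$. Rearranging yields $m\le rs/(r-k)^2=t$, and $t\ge s/r$ follows since $(r-k)^2\le r^2$. The main obstacle is the counting lemma: achieving the sharp $(r-k)^2$ bound, rather than a sloppier $r^2-O(kr)$, depends on ordering the two-stage selection correctly -- first $P$, then $P'$ -- and on the observation that padded-core disjointness reduces to original-core disjointness because the dummy elements are fresh.
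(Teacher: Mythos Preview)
Your proof is correct and follows essentially the same approach as the paper's. Both arguments establish the lower bound by observing that each edge of $\hF$ has at least $R$ disjoint partners while fewer than $R$ edges are leftover, and establish the upper bound via the same counting: for any two disjoint cores the corresponding sunflowers contribute at least $(r-k+1)(r-k)\ge(r-k)^2$ disjoint pairs of $\hF$-edges, and summing the $F$-degrees over one sunflower caps the total at $rs$.
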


\begin{proof}[Proof of Claim~\ref{cl:talmost}] First consider the upper bound. Suppose a core $C \in \hG$ is disjoint from $T$ other cores, $D_1,\dots, D_T$. Consider a particular $D_i$: 
\begin{itemize}
\item  $D_i$ can intersect at most $k-1$ petals around $C$, since $|D_i|= k-1$ and the petals at $C$ are disjoint;
\item similarly, each edge in the sunflower around $C$ can meet at most $k$ petals around  $D_i$.
\end{itemize}
Thus, the number of disjoint pairs $(X,Y)$, where $X$ is an $\hF$-edge in the sunflower with core $C$, and $Y$ is an $\hF$-edge in the sunflower with core $D_i$ for some $i$, is at least $(r-(k-1))(r-k)$. 
Summing the degrees in $F=\disj{\hF}$ of the $r$ edges in the sunflower with core $C$ gives\begin{equation}
T\cdot(r-k)(r-k+1) \leq rs. \notag
\end{equation}

Now, for the lower bound: any edge  $X$ in $\hF$ is disjoint from at least $R$ other edges of $\hF$, and we omit fewer than $R$ edges total as we construct sunflowers. Hence there must be at least one  edge disjoint from $X$ contributing to a core in $\hG$, and disjointness is preserved by reducing to cores. 
\end{proof}

\begin{claim} \label{cl:sizeofG} Fix $\epsilon>0$. Suppose that $r>6k(4^k)/\epsilon$ and $s>2R/\epsilon$. Then 
\begin{equation}\label{eq:sizeofG}
\left(\binom{2k-2}{k-1}-\epsilon\right) t \leq |\hG| \leq  \binom{2k-2}{k-1} t .
\end{equation}
\end{claim}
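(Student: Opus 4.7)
The plan is to treat the two bounds of \eqref{eq:sizeofG} separately. The upper bound is immediate from Theorem~\ref{thm:multi} applied to the core multihypergraph, while the lower bound is a routine estimate combining the sunflower decomposition with the hypotheses on $r$ and $s$.

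For the upper bound, I apply Theorem~\ref{thm:multi} directly to $\hG$. By construction $\hG$ is a $(k-1)$-uniform multihypergraph (the padding with distinct dummy elements preserves uniformity without creating new intersections), and Claim~\ref{cl:talmost} tells us it is $[1,t]$-almost intersecting, so Theorem~\ref{thm:multi} (with $k-1$ in place of $k$ and $t$ in place of $s$) yields
$$|\hG| \leq t\binom{2(k-1)}{k-1} = t\binom{2k-2}{k-1}.$$

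For the lower bound, write $a = \binom{2k-2}{k-1}$. The sunflower decomposition leaves fewer than $R$ edges uncovered, and each sunflower contributes one edge to $\hG$, so
$$|\hG| \geq \frac{|\hF|-R}{r} \geq \frac{(s+1)a-R}{r}.$$
The target inequality $|\hG| \geq (a-\epsilon)t$ with $t = rs/(r-k)^2$ is therefore equivalent, after dividing by $rs/(r-k)^2$, to
$$\left(a + \frac{a-R}{s}\right)\left(1-\frac{k}{r}\right)^2 \geq a - \epsilon.$$
Using $(1-k/r)^2 \geq 1 - 2k/r$ and $a - R \geq -R$, it suffices to check that $R/s + 2ak/r \leq \epsilon$. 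The hypothesis $s > 2R/\epsilon$ gives $R/s < \epsilon/2$; and since $a \leq 4^{k-1}$, the hypothesis $r > 6k\cdot 4^k/\epsilon$ readily gives $2ak/r < \epsilon/2$.

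The only real obstacle here is the arithmetic bookkeeping in the lower bound: the constants $6k \cdot 4^k/\epsilon$ and $2R/\epsilon$ in the hypotheses are calibrated precisely so that the two error terms $2ak/r$ and $R/s$ in the expansion each fit under $\epsilon/2$. No conceptual work beyond Theorem~\ref{thm:multi}, Claim~\ref{cl:talmost}, and the bound $|\hG| \geq (|\hF|-R)/r$ on the number of sunflowers is required.
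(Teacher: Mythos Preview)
Your proof is correct and follows essentially the same approach as the paper. The only cosmetic difference is that the paper argues the lower bound by contradiction (assuming $|\hG|<(\binom{2k-2}{k-1}-\epsilon)t$ and using $|\hF|\le r|\hG|+R$ together with the estimate $t\le (s/r)(1+3k/r)$ to obtain $|\hF|<\binom{2k-2}{k-1}s$), whereas you proceed directly from $|\hG|\ge(|\hF|-R)/r$ and the explicit formula $t=rs/(r-k)^2$; the ingredients and the arithmetic are the same.
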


\begin{proof}[Proof of Claim~\ref{cl:sizeofG}] The upper bound follows immediately from  Claim~\ref{cl:talmost} and Theorem~\ref{thm:multi}.

For the lower bound, note first that 
\begin{equation}\label{eq:test}
\frac{s}{r} \leq t \leq \frac{s}{r}\left(1+\frac{3k}{r} \right),
\end{equation}
with the second inequality true as long as $r>5k$.  Now, if $|\hG|< \left(\binom{2k-2}{k-1}-\epsilon\right) t$,  then $|\hF| \leq r |\hG| + R$ and~\eqref{eq:test} imply that 
\begin{align}\label{eq:hideous}
|\hF| 
\leq\left( \binom{2k-2}{k-1}-\epsilon\right) \left(1+\frac{3k}{r} \right) s+R
< \binom{2k-2}{k-1} s.
\end{align}
Since $|\hF|\geq\binom{2k-2}{k-1}(s+1)$, this is impossible. 
\end{proof}

\begin{remark} At this point we have obtained an asymptotic version of the main conjecture. By construction we know $r |\hG| \leq |\hF| \leq r |\hG| + R,$ so for any $\epsilon,r,s$ satisfying the conditions of Claim~\ref{cl:sizeofG} we  have  
\begin{equation}\label{asymptoticf}
|\hF|\le(1+\epsilon)(s+1)\binom{2k-2}{k-1}.
\end{equation}
\end{remark}

We now look more closely at the structure of $G=\disj{\hG}$, showing that it must be approximately regular (Claim~\ref{cl:regular}) and has neighorhoods which are either  identical or nearly disjoint (Claim~\ref{cl:nearlydisjoint}). 

\begin{claim}\label{cl:regular} If $\epsilon, r, s$ satisfy the conditions of Claim~\ref{cl:sizeofG}, then 
\begin{equation} (1-\epsilon)t \leq \delta(G) \leq \Delta(G)\leq t.  \notag
\end{equation}
\end{claim}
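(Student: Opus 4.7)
I would start by noting that the upper bound $\Delta(G) \leq t$ is immediate from Claim~\ref{cl:talmost}: since $\hG$ is $[1,t]$-almost intersecting, every vertex of $G = \disj{\hG}$ has degree at most $t$. So the task reduces to establishing the lower bound $\delta(G) \geq (1-\epsilon)t$.

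My plan is to argue by contradiction: suppose some $C \in \hG$ has $d_G(C) < (1-\epsilon)t$, and rerun the AB algorithm from the proof of Theorem~\ref{thm:multi} on $\hG$, with the first step rigged to give $A_1 = C$. Claim~\ref{cl:talmost} guarantees $\delta(G) \geq 1$, so $\Gamma(C) \neq \emptyset$ and we may legitimately pick some $B_1 \in \Gamma(C)$ and then set $A_1 := C$. The resulting sequence $(A_i,B_i)$ still satisfies the hypotheses of Theorem~\ref{thm:wci} with $a = b = k-1$, so its length $m$ is at most $\binom{2k-2}{k-1}$.

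The budget computation is then straightforward: at step $1$ the algorithm removes $|\Gamma(C)| = d_G(C) < (1-\epsilon)t$ vertices from $V_1$, and at each subsequent step it removes at most $|\Gamma(A_i)| \leq \Delta(G) \leq t$ vertices. Summing over the $m$ steps (which exhaust $V_1 = \hG$) yields
\begin{equation}
|\hG| \;<\; (1-\epsilon)t + (m-1)t \;\leq\; \left(\binom{2k-2}{k-1} - \epsilon\right)t, \notag
\end{equation}
which contradicts the lower bound on $|\hG|$ in Claim~\ref{cl:sizeofG}.

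There is no real obstacle here — the only point needing care is that the forced first step is legal, which is exactly what the hypothesis $\delta(G) \geq 1$ from Claim~\ref{cl:talmost} supplies. Everything else is bookkeeping for the AB algorithm together with the length bound from Theorem~\ref{thm:wci}.
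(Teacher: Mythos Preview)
Your proof is correct and follows essentially the same approach as the paper: force the first step of the AB algorithm to use a low-degree vertex as $A_1$, bound the number of vertices removed at each step, and invoke the skew Bollob\'as bound together with Claim~\ref{cl:sizeofG} to reach a contradiction. The only cosmetic difference is that you single out an arbitrary vertex $C$ with $d_G(C)<(1-\epsilon)t$, whereas the paper refers to a vertex of minimal degree; this makes no difference to the argument.
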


\begin{proof}[Proof of Claim~\ref{cl:regular}]
If $\delta(G)<(1-\epsilon)t$, then it is possible to run the AB algorithm by taking $B_1$ to be a neighbor of a vertex of minimal degree, $A_1$ to be the vertex of minimal degree itself, and then continuing arbitrarily. We eliminate fewer than $(1-\epsilon)t$ vertices after the first pair, and at most $t$ at each of the following steps. Theorem~\ref{thm:wci} tells us that any run of the AB algorithm must terminate in at most $\binom{2k-2}{k-1}$ steps. Hence 
\begin{align}\label{eq:Gbound}
|\hG| & < (1-\epsilon) t + \left( \binom{2k-2}{k-1}-1\right) t =\left( \binom{2k-2}{k-1}-\epsilon\right) t,
\end{align}
contradicting Claim~\ref{cl:sizeofG}.

\end{proof}

\begin{claim}\label{cl:nearlydisjoint}
If $\epsilon, r, s$ satisfy the conditions of Claim~\ref{cl:sizeofG}, 
then for all $X,Y \in \hF$, either $\Gamma(X)=\Gamma(Y)$ or $|\Gamma(X) \cap \Gamma(Y)| \leq \epsilon t$. 
\end{claim}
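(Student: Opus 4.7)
The plan is to mirror the part of the proof of Theorem~\ref{thm:multi} that classified neighbourhoods in the disjointness graph, replacing exact equality of neighbourhood sizes with the approximate regularity of Claim~\ref{cl:regular} and exact disjointness of neighbourhoods with the claimed threshold $\epsilon t$. The engine is the same as before: a pair $X,Y$ with distinct but heavily overlapping $\Gamma(X),\Gamma(Y)$ will let us initialize the AB algorithm in $G=\disj{\hG}$ so that the second step eliminates substantially fewer than $t$ vertices. Since Theorem~\ref{thm:wci} caps the total number of AB steps at $\binom{2k-2}{k-1}$, summing per-step eliminations pushes $|\hG|$ below the lower bound of Claim~\ref{cl:sizeofG}.

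In more detail, suppose for contradiction that $X,Y\in\hG$ satisfy $\Gamma(X)\ne\Gamma(Y)$ and $|\Gamma(X)\cap\Gamma(Y)|>\epsilon t$. After swapping $X$ and $Y$ if necessary we may assume $\Gamma(Y)\setminus\Gamma(X)\ne\emptyset$. Choose $Z\in\Gamma(X)$ (nonempty by Claim~\ref{cl:talmost}) and $W\in\Gamma(Y)\setminus\Gamma(X)$, and run the AB algorithm on $G$ with
\begin{equation}
B_1=Z,\quad A_1=X,\quad B_2=W,\quad A_2=Y,\notag
\end{equation}
continuing arbitrarily thereafter. Legality is straightforward: $B_1\in V_1$ and $X\in\Gamma(Z)$; at step~$2$, $W\in V_2=\hG\setminus\Gamma(X)$ since $W\notin\Gamma(X)$, and $Y\in\Gamma(W)$ since $W\in\Gamma(Y)$. (Note that $Y$ itself need not lie in $V_2$; since $A_i$ is only required to lie in $\Gamma(B_i)$, this is irrelevant, so we do not need a case split on whether $Y\in\Gamma(X)$.) Let $m$ be the length of the resulting run; by Theorem~\ref{thm:wci} we have $m\le\binom{2k-2}{k-1}$.

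Now count eliminated vertices. Step~$1$ removes $|\Gamma(X)|\le t$ vertices, while step~$2$ removes exactly
\begin{equation}
|\Gamma(Y)\cap V_2|=|\Gamma(Y)\setminus\Gamma(X)|=|\Gamma(Y)|-|\Gamma(X)\cap\Gamma(Y)|<t-\epsilon t\notag
\end{equation}
vertices, using $|\Gamma(Y)|\le t$ from Claim~\ref{cl:regular} together with the standing hypothesis $|\Gamma(X)\cap\Gamma(Y)|>\epsilon t$. Each of the remaining $m-2$ steps removes at most $\Delta(G)\le t$ further vertices, so
\begin{equation}
|\hG|<(2-\epsilon)t+\left(\binom{2k-2}{k-1}-2\right)t=\left(\binom{2k-2}{k-1}-\epsilon\right)t,\notag
\end{equation}
contradicting the lower bound of Claim~\ref{cl:sizeofG}. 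The only delicate point is preserving the strict inequality at step~$2$, which is exactly what produces the gap of size $\epsilon t$ needed to collide with the lower bound on $|\hG|$; everything else is bookkeeping on top of the AB-algorithm skeleton already used in Section~\ref{sec:multi} and Claim~\ref{cl:regular}.
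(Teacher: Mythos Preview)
Your proof is correct and follows essentially the same approach as the paper: initialize the AB algorithm on $G=\disj{\hG}$ with $B_1=Z$, $A_1=X$, $B_2=W$, $A_2=Y$, bound the eliminations at each step, and contradict Claim~\ref{cl:sizeofG}. Your version is in fact slightly tidier: the swap to guarantee $\Gamma(Y)\setminus\Gamma(X)\ne\emptyset$ is cleaner than the paper's appeal to Claim~\ref{cl:regular} for this purpose (which does not by itself rule out $\Gamma(X)\subsetneq\Gamma(Y)$), and your explicit verification that $A_2=Y$ is legal even if $Y\notin V_2$, together with the strict inequality at step~2, are good hygiene.
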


\begin{proof}[Proof of Claim~\ref{cl:nearlydisjoint}]
Assume, to the contrary, that there exist $X,Y \in \hG$ with $\Gamma(X)\not=\Gamma(Y)$ and $|\Gamma(X) \cap \Gamma(Y)| >\epsilon t$. By Claim~\ref{cl:regular} we have $(1-\epsilon)t\leq d_G(X),d_G(Y) \leq t$, so there must exist $Z \in \Gamma(X)\setminus \Gamma(Y)$ and $W \in \Gamma(Y)\setminus\Gamma(X)$. Run the AB algorithm with $B_1=Z$, $A_1=X$, $B_2=W$ and $A_2=Y$. After the first pair at most $t$ vertices are eliminated. However, after the second pair, at most $(1-\epsilon)t$ vertices are eliminated, because of the non-trivial intersection of the two neighbourhoods. Again by Theorem~\ref{thm:wci} all vertices must be eliminated after at most $\binom{2k-2}{k-1}$ steps, so we conclude
\begin{equation}\label{eq:Gbound2}
|\hG| \leq t + (1-\epsilon)t + \left(\binom{2k-2}{k-1}-2 \right)t.
\end{equation}
Since the right-hand-sides of \eqref{eq:Gbound2} and~\eqref{eq:Gbound} are equal, Claim~\ref{cl:sizeofG} again gives a contradiction.
\end{proof}

\begin{claim}\label{cl:bipartites} Let $r = 10 k (64^k)$, and suppose that $s>2(160^k)(k^{2k})(64^{k^2})$. Then $G$ is a disjoint union of exactly $\frac{1}{2} \binom{2k-2}{k-1}$ complete bipartite graphs in which the size of each part of each component is between $(0.99)t$ and~$t$. 
\end{claim}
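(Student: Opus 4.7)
The plan is to introduce an equivalence relation on $V(G)$ by declaring $X\sim Y$ iff $\Gamma(X)=\Gamma(Y)$, and analyze its classes $C_1,\ldots,C_p$ with common neighborhoods $N_1,\ldots,N_p$. Claim~\ref{cl:regular} gives $|N_i|\in[(1-\epsilon)t,t]$, and since any $Y\in N_i$ is adjacent to all of $C_i$ we get $|C_i|\le|\Gamma(Y)|\le t$. Call $C_i$ \emph{large} if $|C_i|>\epsilon t$ and \emph{small} otherwise. Throughout the argument I will take $\epsilon$ small enough that $\epsilon\le 0.01$ and $\epsilon<1/(\binom{2k-2}{k-1}+1)$; the stated values of $r$ and $s$ are chosen precisely so that such an $\epsilon$ is also compatible with the conditions of Claim~\ref{cl:sizeofG}.

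The key structural step is to show that large classes pair up into complete bipartite components. If $C_i$ is large but two vertices $Y,Y'\in N_i$ lie in distinct equivalence classes, then $C_i\subseteq\Gamma(Y)\cap\Gamma(Y')$ contradicts Claim~\ref{cl:nearlydisjoint}. So $N_i$ lies in a single class $C_{i^*}$, which is itself large since $|C_{i^*}|\ge|N_i|>\epsilon t$. For any $Y\in N_i\subseteq C_{i^*}$ we have $C_i\subseteq\Gamma(Y)=N_{i^*}$, and applying the structural step to $C_{i^*}$ shows that $N_{i^*}$ lies in a single class which must be $C_i$. Thus $N_i=C_{i^*}$ and $N_{i^*}=C_i$, so $C_i\cup C_{i^*}$ spans a complete bipartite component $K_{|C_i|,|C_{i^*}|}$ with both parts of size in $[(1-\epsilon)t,t]$.

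To count the classes I combine two bounds. Since $|C_i|\le t$, Claim~\ref{cl:sizeofG} gives $p\ge|\hG|/t\ge\binom{2k-2}{k-1}-\epsilon$, so $p\ge\binom{2k-2}{k-1}$ by integrality. For the matching upper bound, pick a representative $(k-1)$-set $v_i$ from each class (distinct classes have distinct underlying $(k-1)$-sets, since two vertices of $G$ representing the same set automatically have identical neighborhoods). Using $|N_i\cap N_j|\le\epsilon t$ for $i\ne j$ from Claim~\ref{cl:nearlydisjoint}, I greedily choose $w_j\in N_j\setminus\bigcup_{i<j}N_i$: this set has size at least $(1-j\epsilon)t>0$ by the choice of $\epsilon$. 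Then $v_j\cap w_j=\emptyset$ (since $w_j\in\Gamma(v_j)$) and $v_i\cap w_j\ne\emptyset$ for $i<j$ (since $w_j\notin\Gamma(v_i)$), so Theorem~\ref{thm:wci} with $a=b=k-1$ yields $p\le\binom{2k-2}{k-1}$.

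Finally, writing $L,S$ for the numbers of large and small classes, $L+S=\binom{2k-2}{k-1}$ and $|\hG|\le Lt+S\epsilon t$; combining with $|\hG|\ge(\binom{2k-2}{k-1}-\epsilon)t$ forces $S(1-\epsilon)\le\epsilon$, hence $S=0$. So every class is large and pairs up, decomposing $G$ into exactly $\tfrac{1}{2}\binom{2k-2}{k-1}$ complete bipartite components whose parts have size in $[(1-\epsilon)t,t]\subseteq[0.99t,t]$. The main technical point is the balancing act on $\epsilon$: it must be small enough for the greedy selection and the $0.99$ bound, yet consistent with $\epsilon>6k(4^k)/r$ and $s>2R/\epsilon$ from Claim~\ref{cl:sizeofG}. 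The choices $r=10k(64^k)$ and the given lower bound on $s$ leave exactly such a window, and this final constant-fiddling is the only routine verification.
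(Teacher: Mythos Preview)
Your argument is correct, and the overall architecture (equivalence classes under equal neighbourhoods, then showing these classes pair up into complete bipartite components) matches the paper's. The details, however, differ in two places worth noting.

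First, for the upper bound $p\le\binom{2k-2}{k-1}$, the paper does not invoke skew Bollob\'as again; instead it argues by inclusion--exclusion that if there were $d=\binom{2k-2}{k-1}+1$ distinct neighbourhoods then $|\hG|\ge\bigl|\bigcup_{i\le d}N_i\bigr|\ge\sum_{i\le d}(1-i\epsilon)t>\binom{2k-2}{k-1}t$, contradicting the upper bound in Claim~\ref{cl:sizeofG}. Your route---building skew pairs $(v_j,w_j)$ with $w_j\in N_j\setminus\bigcup_{i<j}N_i$ and applying Theorem~\ref{thm:wci}---is equally valid and arguably more in the spirit of the surrounding arguments; just make explicit that you only need to construct $\binom{2k-2}{k-1}+1$ such pairs to reach a contradiction, since the greedy choice is guaranteed only for $j$ with $j\epsilon<1$.

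Second, for the bipartite structure, the paper first shows every class has size at least $2t/3$ (via the lower bound on $|\hG|$), and then argues that if some vertex lay in two distinct neighbourhoods its degree would exceed $4t/3>t$. You instead leverage Claim~\ref{cl:nearlydisjoint} directly: for a large $C_i$, any two $Y,Y'\in N_i$ in different classes would have $|\Gamma(Y)\cap\Gamma(Y')|\ge|C_i|>\epsilon t$, so $N_i$ sits in a single class. This is a clean use of the ``nearly disjoint neighbourhoods'' claim and lets you defer the elimination of small classes to the end via the counting inequality $S(1-\epsilon)\le\epsilon$. Both approaches need the same constraints on $\epsilon$ (and $\epsilon=1/16^k$ works for either), so neither buys a quantitative improvement; yours simply trades a degree argument for a second appeal to Claim~\ref{cl:nearlydisjoint}.
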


\begin{proof}[Proof of Claim~\ref{cl:bipartites}]
First, set $\epsilon = \frac{1}{16^k}$  and note that $\epsilon, r,s$ then satisfy the conditions for Claims~\ref{cl:sizeofG}, \ref{cl:regular} and~\ref{cl:nearlydisjoint}. Recall that $R=k^kr^k$.

Let $N_1,N_2,\dots$ be the distinct neighbourhoods that occur in $\hG$.  We know that $(1-\epsilon) t \le|N_i|\le t$ for each $i$, and $|N_i\cap N_j|<\epsilon t$ for distinct $i,j$.  So for each $i$, 
$|N_i\setminus \bigcup_{j<i} \Gamma(X_j)| \ge |N_i|-\sum_{j<i}|N_i\cap N_j|\ge (1-i\epsilon)t$.
If there are at least $d=\binom{2k-2}{k-1}+1$ distinct neighbourhoods, then 
\begin{equation}
|\hG|\ge \left|\bigcup_{i=1}^d N_i \right| \geq \sum_{i=1}^d (1-i \epsilon)t =\left(d-\binom{d+1}{2} \epsilon \right)t
>\binom{2k-2}{k-1}t. \notag
\end{equation}
Since $\binom{d+1}{2}<1/\epsilon$, this contradicts Claim~\ref{cl:sizeofG}.

We therefore have that there are at most $\binom{2k-2}{k-1}$ distinct neighbourhoods.  By Claim~\ref{cl:regular}, all neighbourhoods have size at most $t$. Since $\delta(\hG)>0$, the neighbourhoods cover all the vertices. By the lower bound in Claim~\ref{cl:sizeofG}, there are exactly $\binom{2k-2}{k-1}$ neighbourhoods. 

Note that  $\Gamma(x) = \Gamma(y)$, for $x,y$ vertices of $G$, is an equivalence relation on the vertex set of $G$. No equivalence class can contain more than $t$ vertices, since then any vertex in the corresponding neighbourhood would have degree greater than $t$. If any equivalence class contains fewer than $\frac{2t}{3}$ vertices, then the total number of vertices in $G$ is less than  $\left(\binom{2k-2}{k-1}-\frac{1}{3}\right)t$, contradicting Claim~\ref{cl:sizeofG}. Hence every class contains at least $\frac{2t}{3}$ vertices.

Now suppose that $x \in \Gamma_1 \cap \Gamma_2$ witnesses the intersection of two distinct neighbourhoods. Then $x$ has degree at least $\frac{2t}{3}+\frac{2t}{3} = \frac{4t}{3}$, since $x$ is adjacent to every vertex with neighbourhood $\Gamma_1$ and every vertex with neighbourhood $\Gamma_2$. This is impossible, since $\Delta(G) \leq t$. We can conclude that distinct neighbourhoods are in fact fully disjoint. 

For any edge $\{x,y\}$ of $G$, we have $\Gamma(x) = \{z: \Gamma(z)= \Gamma(y)\}$. It follows that the component of $\{x,y\}$ is the complete bipartite graph with parts $\Gamma(x)$ and $\Gamma(y)$, and $G$ therefore has the claimed structure. The lower bound on the size of the classes follows from Claim~\ref{cl:regular}.
\end{proof}

We are now ready to define the promised $r_0(k) = 10k(64)^k$ and $s_0(k)=2(160^k)(k^{2k})(64^{k^2})$, as in the conditions of Claim~\ref{cl:bipartites}. Note that $R=k^kr^k = 10^k k^{2k} 64^{k^2}$. 

The next step of the argument is identical to that at the end of the proof of Theorem~\ref{thm:multi}.  Extract a matching from $G$, taking one edge $(X_i,Y_i)$ from each component for $i=1,\dots, \frac{1}{2} \binom{2k-2}{k-1}$. Recalling that vertices of $G$ are actually edges in $\hG$,  consider the family of pairs of sets
\begin{equation}
(X_1,Y_1), (Y_1,X_1), (X_2,Y_2), (Y_2,X_2),\dots \notag
\end{equation}
By construction, this is a $(k-1)$-uniform cross-intersecting family with $\binom{2k-2}{k-1}$ pairs. By Theorem~\ref{thm:ci}, there exists a set $S$ of size $2k-2$ such that every pair $(X_i,Y_i)$ consists of complementary subsets of $S$, both of size $k-1$. 
As before, replacing any $X_i$ or $Y_i$ with another vertex from the same part of the same component does not change the graph. Hence we know that the core multihypergraph $\hG$ contains all $k-1$ element subsets of a fixed $(2k-2)$-element set $S$, with all multiplicities between $(0.99)t$ and $t$. By the structure of $\hG$, we know that no dummy elements were required in the construction of $\hG$; all the sunflowers had $(k-1)$-element cores and single element petals. 

We once again consider our original simple hypergraph, $\hF$, which we know to be $[R,s]$-almost intersecting and of size at least $(s+1)\binom{2k-2}{k-1}$. 
\begin{claim}\label{cl:blob}
Every edge in $\hF$ intersects $S$ in exactly $k-1$ elements, and every $(k-1)$-element subset of $S$ is contained in exactly $s+1$ edges of~$\hF$. 
\end{claim}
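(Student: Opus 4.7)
The plan is to prove the two assertions in order. First I show every edge of $\hF$ meets $S$ in exactly $k-1$ elements, treating sunflower edges and leftover edges separately. We have already established that every sunflower has a $(k-1)$-element core contained in $S$, so every sunflower edge has the form $T \cup \{y\}$ with $T \subseteq S$, $|T| = k-1$, and $y \notin T$. If $y \in S$, then $e = T \cup \{y\} \subseteq S$; any sunflower edge disjoint from $e$ would need its core to be a $(k-1)$-subset of $S \setminus e$, but $|S \setminus e| = k - 2$, so no such core exists. Since there are fewer than $R$ leftover edges in total, we get $d_F(e) < R$, contradicting $\delta(F) \geq R$. Hence $y \notin S$.

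Next, I argue similarly for a leftover edge $e$, setting $i = |e \cap S|$. The case $i = k$ is ruled out exactly as above. If $i \leq k - 2$, then $|S \setminus e| \geq k$, so there are at least $\binom{2k-2-i}{k-1} \geq k$ distinct $(k-1)$-subsets $T \subseteq S$ disjoint from $e$. By Claim~\ref{cl:bipartites}, each such $T$ is the core of at least $(0.99) t$ sunflowers in $\hF$, which contribute at least $r \cdot (0.99) t - (k - i)$ sunflower edges disjoint from $e$ (losing only those with a petal inside the $(k-i)$-element set $e \setminus S$). Summing over the $\geq k$ choices of $T$ gives $d_F(e) > k \cdot [(0.99) r t - k]$, and since $rt > s$ (from $t = rs/(r-k)^2$) and $k \geq 3$, this exceeds $s$ for $s$ sufficiently large, contradicting $\Delta(F) \leq s$. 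So $i = k - 1$.

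For the second part, every edge of $\hF$ now has the form $T \cup \{x\}$ with $T \subseteq S$ a $(k-1)$-set and $x \notin S$. Write $d(T)$ for the number of edges of $\hF$ containing $T$ and $P(T) = \{x : T \cup \{x\} \in \hF\}$. Two such edges $T \cup \{x\}$ and $T' \cup \{x'\}$ are disjoint exactly when $T \cap T' = \emptyset$ and $x' \neq x$; since $T, T' \subseteq S$ have size $k-1$, the first condition forces $T' = S \setminus T$. Thus the disjointness-degree of $T \cup \{x\}$ is either $d(S \setminus T)$ or $d(S \setminus T) - 1$ (according to whether $x \in P(S \setminus T)$), and $\Delta(F) \leq s$ gives $d(S \setminus T) \leq s + 1$ for every $(k-1)$-subset $T$ of $S$. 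Because each edge $e \in \hF$ is counted in $d(T)$ for the unique $T = e \cap S$, we have $|\hF| = \sum_T d(T) \leq (s+1) \binom{2k-2}{k-1}$. Combined with the standing hypothesis $|\hF| \geq (s+1) \binom{2k-2}{k-1}$, this forces equality, and hence $d(T) = s + 1$ for every~$T$.

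The main obstacle is the leftover case when $i \leq k - 2$: since $r \cdot (0.99) t$ is only marginally larger than $s$, one really needs both the lower bound on the equivalence-class sizes from Claim~\ref{cl:bipartites} and the combinatorial multiplier $\binom{2k-2-i}{k-1} \geq k$ to push $d_F(e)$ past $s$, and the latter is precisely where the hypothesis $k > 2$ is used.
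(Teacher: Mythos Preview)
Your proof is correct and follows essentially the same route as the paper's: use the established sunflower structure on $S$ to show that any edge meeting $S$ in at most $k-2$ points would be disjoint from too many edges, rule out edges contained in $S$ via the minimum-degree condition, and then use $\Delta(F)\le s$ together with the size hypothesis on $\hF$ to pin down each $d(T)$ as exactly $s+1$. Your organization is slightly different---you first show that every sunflower petal lies outside $S$ and only then treat the leftover edges---which has the pleasant side effect of making the ``no double-counting across different cores $T$'' step transparent (each sunflower edge meets $S$ in exactly its core). One small remark: your closing comment that the multiplier $\binom{2k-2-i}{k-1}\ge k$ is ``precisely where the hypothesis $k>2$ is used'' is not quite right; the inequality $k\cdot(0.99)rt - k^2 > s$ already holds for $k=2$ once $s$ is large, so this claim does not actually require $k>2$ (that hypothesis is needed elsewhere in the theorem, in Lemma~\ref{lem:nokdisjoint}).
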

\begin{proof}[Proof of Claim~\ref{cl:blob}] First, let $X$ be a $(k-1)$-element subset of $S$. Then there must be at least $(0.99)s$ edges in $\hF$ containing $X$, since $X$ has multiplicity at least $(0.99)t$ in $\hG$,  each occurrence in $\hG$ corresponds to a sunflower with $r$ petals in $\hF$, and, by Claim~\ref{cl:talmost}, we must have $tr>s$.

Now, if any edge $Z$ of $\hF$ intersects $S$ in $q\leq k-2$ or fewer vertices, it would be disjoint from at least $((0.99)s-k)\binom{2k-2-q}{k-1}$ edges whose cores lie in $S \setminus Z$. This is far more than the allowed $s$ disjointnesses. We conclude that $|Z \cap S| \geq k-1$.

What if $Z \subseteq S$? Then $Z$ intersects every sunflower core; hence, by the preceding paragraph, $Z$ intersects every edge in $\hF$. This is also impossible. 

Note also that if $X$ is contained in \textit{more} than $s+1$ edges of $\hF$, then any edge $W$ of $\hF$ containing $S\setminus X$ is disjoint from at least $s+1$ of the edges containing $X$, a contradiction. 

Since $\hF$ has at least $(s+1)\binom{2k-2}{k-1}$ edges, every element of $\binom{S}{k-1}$ must be contained in exactly $(s+1)$ edges of $\hF$. 
\end{proof}

We have shown that $|\hF| \leq (s+1)\binom{2k-2}{k-1}$. To conclude the main proof, we now assume that~$\hF$ has exactly $(s+1)\binom{2k-2}{k-1}$ edges. 
Let $X,Y \in \binom{S}{k-1}$ be a fixed pair of disjoint sets, and let $x_1, \dots, x_{s+1}$ be the petal vertices over the core $X$. If there exists a vertex $y \not\in \{x_1,\dots,x_{s+1}\}$ such that $Y'=\{y\}\cup Y \in \hF$, then $Y'$ is disjoint from every one of the $s+1$ edges containing $X$, which is impossible. Thus the $s+1$ edges of $\hF$ containing $Y$ are $\{x_1\}\cup Y, \dots, \{x_{s+1}\} \cup Y$. This suffices to show that $\hF$ is indeed of the form $\mathcal{M}_{f}$ for an appropriate function $f$. 
\end{proof}


\section{Small values of $k$}\label{sec:reduced}

In this section we specialize to the cases $k=2$ and $k=3$, and show that for both values we can in fact take $R=1$ in Theorem 3.1.

\begin{theorem}
\label{thm:simple2} For $s>{\color{Black}13}$, any $[1,s]$-almost intersecting graph has at most  $2s+2$ edges. The only graph achieving this bound is $K_{2,s+1}$.
\end{theorem}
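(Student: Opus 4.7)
The plan is to reformulate the $[1,s]$-almost intersecting condition on a graph $G = (V, E)$ with $m = |E|$ edges as a degree-sum inequality and then analyse the structure based on the maximum degree. Since the number of edges vertex-disjoint from an edge $uv$ equals $m - d(u) - d(v) + 1$, the hypothesis is equivalent to
\[
m - s + 1 \le d(u) + d(v) \le m \quad \text{for every edge } uv \in E.
\]
The aim is to show $m \le 2s+2$, with equality forcing $G = K_{2,s+1}$.

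Let $u$ be a vertex of maximum degree $\Delta$. An edge with both endpoints outside $N[u]$ is vertex-disjoint from all $\Delta$ edges at $u$, forcing $\Delta \le s$; an edge from $N(u)$ to $V \setminus N[u]$ is disjoint from all but one edge at $u$, forcing $\Delta \le s + 1$; and if every edge lies inside $N[u]$, one similarly gets $\Delta \le s + 2$. The last case reduces, via the disjointness bound applied to edges inside $N(u)$, to $G[N(u)]$ being a star, after which the degree-sum condition on the edge from $u$ to the star's centre fails. Hence $\Delta \le s + 1$.

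Assume $\Delta = s + 1$. Then no edges lie outside $N[u]$. If there is a crossing edge $e = vx$ with $v \in N(u)$, $x \notin N[u]$, then $d(e) \ge s$ (from the $s$ edges $\{uv' : v' \in N(u)\setminus\{v\}\}$) combines with $d(e) \le s$ to give equality, so every other edge of $G$ meets $\{v, x\}$. A pigeonhole then forces all crossing edges to share a common vertex $w$; the sub-case $w \in N(u)$ forces $m \le 2s+1$, so $w \in V \setminus N[u]$, and applying the degree-sum inequality on the edges $uv'$ forces every $v' \in N(u)$ to be adjacent to $w$, producing $G = K_{2,s+1}$ with $m = 2s+2$. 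The remaining sub-case (no crossing edge) makes $H = G[N(u)]$ have $a = m - (s+1) \ge s + 1$ edges satisfying $d_H(v) + d_H(w) \ge a$ on each edge; summing over a matching gives $\nu(H) \le 2$, and for $|N(u)| = s+1 > 5$, Erd\H{o}s--Gallai forces $H$'s edges to be covered by two vertices, after which direct bookkeeping of $d_H(x) + d_H(y)$ against $a$ yields a contradiction.

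Finally, the case $\Delta \le s$ is ruled out by showing $m < 2s + 2$. The degree-sum condition forces every non-isolated vertex to have degree at least $m - \Delta - s + 1 \ge 3$, so degrees are confined to $[3, s]$, and an analysis around a maximum-degree vertex $u$ rules out edges outside $N[u]$, bounds the crossings from $N(u)$ to $V \setminus N[u]$, and concentrates almost all edges inside $G[N(u)]$; an Erd\H{o}s--Gallai reduction analogous to the one above then reaches a contradiction. This last case is the main obstacle of the proof: the crude degree-sum inequalities alone do not rule out $\Delta \le s$, and one must combine the global identity $\sum_v d(v)^2 \ge m(m-s+1)$ with careful structural bookkeeping inside $N(u)$. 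The explicit threshold $s > 13$ presumably emerges from the minimum size of $|N(u)|$ needed for the Erd\H{o}s--Gallai reduction to bite.
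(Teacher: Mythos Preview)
Your approach via a case analysis on the maximum degree $\Delta$ is genuinely different from the paper's, but the proposal has real gaps.

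The most serious is the case $\Delta \le s$, which you yourself flag as ``the main obstacle'' and then leave essentially unproved: you invoke an unspecified ``Erd\H{o}s--Gallai reduction'' and the identity $\sum_v d(v)^2 \ge m(m-s+1)$, but give no argument showing these combine to force $m < 2s+2$, and you write that the threshold $s>13$ ``presumably'' arises here. That is not a proof but a hope; nothing you have written rules out a $[1,s]$-almost intersecting graph with $\Delta\le s$ and $m\ge 2s+2$.

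There are also smaller gaps in the $\Delta = s+1$ analysis. In the sub-case with no crossing edges, you claim that $\nu(H)\le 2$ together with Erd\H{o}s--Gallai ``forces $H$'s edges to be covered by two vertices''; but $\nu(H)\le 2$ only yields a four-vertex cover (the endpoints of a maximum matching), not a two-vertex cover, so this step is incorrect as stated. In the sub-case with a crossing edge and $w\notin N[u]$, you still have to account for possible edges inside $N(u)$ (which, by your own disjointness count, must all pass through the $N(u)$-endpoint $v$ of your original crossing edge), and the deduction that every $v'\in N(u)$ is adjacent to $w$ needs more than the bare inequality $d(v')\ge m-2s$.

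The paper avoids the problematic $\Delta\le s$ case entirely. Rather than splitting on $\Delta$, it picks any pair of disjoint edges $x_1y_1$, $x_2y_2$ (these exist by the $[1,s]$ hypothesis) and shows directly that each such edge has one endpoint of degree at most $7$: if $\min\{d(x_1),d(y_1)\}\ge 8$, then an edge at $x_1$ avoiding $\{y_1,x_2,y_2\}$ is disjoint from too many edges (at least $6$ at $y_1$ and at least $s-3$ meeting $x_2y_2$, minus a small overlap). Hence, after relabelling, $d(x_1),d(x_2)\ge s-4$. Now any edge missing $\{x_1,x_2\}$ is disjoint from at least $(d(x_1)-2)+(d(x_2)-2)-1\ge 2s-13>s$ edges, so every edge passes through $x_1$ or $x_2$; this is exactly where $s>13$ enters. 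The remainder is short bookkeeping (ruling out the edge $x_1x_2$ and counting). This argument never needs to treat a low-maximum-degree regime separately, and is both shorter and complete.
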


\begin{proof}
Let $\hF$ be any $[1,s]$-almost intersecting graph with $m\ge 2s+2$ edges.  
Let $X$ be the vertex set of $\hF$, and let $F=\disj{\hF}$ be the disjointness graph of $\hF$.  
For $e\in \hF$ and $x\in X$, we shall write $d_F(e)$ for
 the degree in $F$ of $e$ (i.e.~the number of edges disjoint from $e$) and  $d(x)$ for the 
degree in $\hF$ of $x$ (i.e.~the
number of edges that contain $x$).  
Note that $\delta(F)\ge1$: we will carry out most of the proof under the weak assumption that no vertex meets every edge of $\hF$, and only use the assumption that $\delta(F)\ge1$ when we need it.

Next note that:
\begin{itemize}
\item \textcolor{Black}{Any edge $e=\{x,y\}$  of $\hF$  meets (in one vertex) $(m-1)-d_F(e) \geq (m-1)-s \geq s+1$ edges. }
\item \textcolor{Black}{However, $e=\{x,y\}$ also meets $(d(x)-1)+(d(y)-1)$ edges (in one vertex), so $d(x)+d(y) \geq s+3$.}  
\end{itemize}
Now choose a pair $e_1=x_1y_1$ and $e_2=x_2y_2$ of disjoint edges 
(which must exist as $m>3$ and $\hF$ is not a star).   

Suppose that $\min\{d(x_1),d(y_1)\}\ge \textcolor{Black}{8}$.  Let $f$ be any edge incident with $x_1$ that does not meet any of $\{y_1,x_2,y_2\}$.  Then $f$ is disjoint from 
at least $\textcolor{Black}{6}$ of the edges incident with ${\textcolor{Black}y}_1$ 
and at least $\textcolor{Black}{(s+1)}-4=\textcolor{Black}{s-3}$ of the edges incident with $e_2$; since at most $2$ edges are double-counted, $f$ is disjoint from at least $\textcolor{Black}{6+(s-3)-2}>s$ edges, giving a contradiction.  
Arguing symmetrically for $x_2$ and $y_2$ (and relabelling if necessary) we may assume that $d(y_1),d(y_2)\le \textcolor{Black}{7}$ and therefore $d(x_1),d(x_2)\ge \textcolor{Black}{(s+3)-7=s-4}$.

Now if any edge is disjoint from $\{x_1,x_2\}$, it \textcolor{Black}{meets} at most two edges incident with each, and so (as there may be an edge $x_1x_2$) misses at least
$(d(x_1)-2)+(d(x_2)-2)-1=d(x_1)+d(x_2)-5\ge \textcolor{Black}{2s-13}>s$ edges.  This is again a contradiction, so we see that all edges are incident with $x_1$ or $x_2$.

The edge $x_1x_2$ is not present, or it would meet every other edge (this is the only place we use the condition $\delta(F)\ge1$).  Thus any edge incident with $x_1$ meets at most one edge incident with $x_2$; it follows that $x_2$ is incident with at most $s+1$ edges, and similarly $x_2$ is incident with at most $s+1$ edges.  We deduce that $m=2s+2$, and $x_1$ and $x_2$ are each incident with exactly $s+1$ edges.  Furthermore, every edge incident with $x_1$ meets an edge incident with $x_2$.  It follows that $\hF$ is a copy of $K_{2,s+1}$.
\end{proof}

We do not know the smallest possible value for $s$ in Theorem \ref{thm:simple2}; however there are only finitely many graphs with at most \textcolor{Black}{30}
edges and no isolated vertices, and so this could in principle be determined by a finite check. As noted in~\cite{GLPPS12}, there exist $s$-almost intersecting graphs with more than $2s+1$ edges for  $s=1,3,6$. 
  
\begin{theorem}
\label{thm:simple2a} 
 For $s>13$, any $[0,s]$-almost intersecting graph that is not a star has at most 
$2s+3$ edges.   The only graph achieving this bound 
is $K_2+E_{s+1}$.
\end{theorem}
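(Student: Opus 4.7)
The plan is to follow the proof of Theorem~\ref{thm:simple2} almost verbatim, exploiting the observation made there that nearly all of that argument uses only the hypothesis that $\hF$ is not a star; the condition $\delta(\disj{\hF})\ge 1$ was invoked exactly once, to rule out the edge $x_1x_2$ at the very end. Passing to $[0,s]$-almost intersecting simply allows $x_1x_2$ to appear, which is precisely the extra edge by which $2s+3$ exceeds $2s+2$.

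Let $\hF$ be a $[0,s]$-almost intersecting non-star with $m\ge 2s+3$ edges. Since $m>3$ and $\hF$ is not a star, $\hF$ contains a pair of disjoint edges $e_1=x_1y_1$ and $e_2=x_2y_2$. Each edge meets at least $(m-1)-s\ge s+2$ others, so $d(x)+d(y)\ge s+4$ for every edge $\{x,y\}$. I would then run the same double-counting steps as in the proof of Theorem~\ref{thm:simple2}, with the mildly improved bounds coming from $s+2$ in place of $s+1$, to conclude (after relabelling) that $d(y_1),d(y_2)\le 7$, that $d(x_1),d(x_2)\ge s-3$, and that every edge of $\hF$ is incident with $x_1$ or $x_2$. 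None of these steps used the lower bound $\delta(\disj{\hF})\ge 1$.

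Next I split into two cases. If $x_1x_2\notin\hF$, then any edge at $x_2$ is disjoint from at least $d(x_1)-1$ of the edges at $x_1$, forcing $d(x_1)\le s+1$ and symmetrically $d(x_2)\le s+1$; hence $m\le d(x_1)+d(x_2)\le 2s+2$, contradicting $m\ge 2s+3$. So $x_1x_2\in\hF$. Then any edge $x_1z$ with $z\ne x_2$ is disjoint from every edge at $x_2$ except $x_1x_2$ itself and possibly the edge $x_2z$, so it is disjoint from at least $d(x_2)-2$ edges, giving $d(x_2)\le s+2$; symmetrically $d(x_1)\le s+2$. Since $x_1x_2$ is counted in both degrees, $m=d(x_1)+d(x_2)-1\le 2s+3$, matching the hypothesis.

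For uniqueness in the extremal case, equality throughout forces $d(x_1)=d(x_2)=s+2$; moreover the bound $d(x_2)-2\le s$ coming from an edge $x_1z$ with $z\ne x_2$ is now tight, which is possible only if $x_2z\in\hF$. Hence every neighbour of $x_1$ other than $x_2$ is also a neighbour of $x_2$, and by symmetry $x_1$ and $x_2$ share an identical set of $s+1$ common neighbours, so $\hF\cong K_2+E_{s+1}$. The main obstacle is really just careful bookkeeping; the only genuinely new feature compared to Theorem~\ref{thm:simple2} is that the edge $x_1x_2$ must now be present, and it is exactly this edge that supplies the extra $+1$ in the bound.
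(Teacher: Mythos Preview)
Your proof is correct and follows essentially the same approach as the paper. The paper's version is even terser: rather than splitting into cases on whether $x_1x_2\in\hF$, it simply deletes $x_1x_2$ if present, applies the final paragraph of the proof of Theorem~\ref{thm:simple2} verbatim to conclude the remaining graph has at most $2s+2$ edges and equals $K_{2,s+1}$ when equality holds, and then observes that since $m\ge 2s+3$ the edge $x_1x_2$ must have been there, yielding $K_2+E_{s+1}$. Your explicit case analysis and the paper's deletion trick are logically equivalent reformulations of the same argument.
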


\begin{proof}
Suppose that $\hF$ satisfies the conditions of the theorem and has $m\ge 2s+3$ edges.
We follow the proof of Theorem \ref{thm:simple2} through to the beginning of the final paragraph: at this point we have only used the conditions that $\Delta(F)\le s$ and $\hF$ is not a star.  We now delete the edge $x_1x_2$ if present, and complete the argument, finding that we are left with a copy of $K_{2,s+1}$.  It follows that $x_1x_2$ must have been present, and adding it back gives $K_2+E_{s+1}$, as claimed.
\end{proof}

For $k=3$, we can prove a similar strengthening of Theorem \ref{thm:simple}.

\begin{theorem}
\label{thm:simple3} 
For $s>625$, any $[1,s]$-almost intersecting 3-uniform hypergraph has at most 
$6s+6$ edges. The only hypergraphs achieving this bound 
are of the form $\mathcal{M}_f$ for some function $f$. 
\end{theorem}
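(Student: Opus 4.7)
Our plan is to adapt the skeleton of Theorem~\ref{thm:simple2} (the $k=2$ case), with the structural endgame borrowed from the final paragraphs of Theorem~\ref{thm:simple}.  Let $\hF$ be a $3$-uniform $[1,s]$-almost intersecting hypergraph with $|\hF| \ge 6s+6$; the goal is to show $\hF = \mathcal{M}_f$ for some admissible $f$.  Since $s > 625 > 3^3 = k^k$, Lemma~\ref{lem:nokdisjoint} shows $\hF$ contains no matching of size $3$, and $\delta(\disj{\hF}) \ge 1$ provides a disjoint pair of edges $e_1 = \{a_1,a_2,a_3\}$ and $e_2 = \{b_1,b_2,b_3\}$.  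Every other edge of $\hF$ meets $V_0 := e_1 \cup e_2$.  Classifying edges by the type $(|f \cap e_1|, |f \cap e_2|)$ and using $d_{\disj{\hF}}(e_i) \le s$, at most $2(s-1)$ edges meet exactly one of $e_1,e_2$; since at most $\binom{6}{3}-2 = 18$ edges can lie inside $V_0$, at least $4s-14$ edges are of the cross type $(1,1)$, each with one vertex outside~$V_0$.

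The main step is to locate a $4$-element ``core'' $A \subset V_0$ such that every edge of $\hF$ contains exactly two elements of $A$.  The principal tool is a codegree bound: for any pair $\{u,v\}$, an edge $g$ disjoint from $\{u,v\}$ shares a vertex with at most three of the $c(\{u,v\})$ edges containing $\{u,v\}$, so $g$ has disjointness degree at least $c(\{u,v\})-3$; since $\Delta(\disj{\hF}) \le s$, either $\{u,v\}$ is a vertex cover of $\hF$ or $c(\{u,v\}) \le s+3$.  Because $\hF$ has no matching of size three and is not intersecting, any vertex cover of size two must consist of one vertex from each of $e_1$ and $e_2$.  Combining these bounds with the codegree-sum identity
\[
\sum_{\{u,v\} \subset V_0} c(\{u,v\}) \;=\; \sum_{f \in \hF} \binom{|f \cap V_0|}{2}
\]
and the counts from the type classification, we argue that exactly six pairs in $\binom{V_0}{2}$ have codegree close to $s+1$, and that these six pairs form $\binom{A}{2}$ for a unique $4$-subset $A \subset V_0$; the two remaining vertices in $V_0 \setminus A$ play the role of the ``petals'' of $e_1$ and $e_2$.

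With $A$ identified, the proof concludes exactly as in the final two paragraphs of the proof of Theorem~\ref{thm:simple}.  Every edge of $\hF$ must contain exactly two elements of $A$: an edge meeting $A$ in fewer vertices would be disjoint from too many of the $s+1$ edges through some heavy pair, while one contained in $A$ would force impossible disjointnesses through the complementary pair in $\binom{A}{2}$.  Each $S \in \binom{A}{2}$ is then contained in exactly $s+1$ edges whose petals form a set $f(S) \subset V \setminus A$, and the disjointness condition forces $f(S) = f(A \setminus S)$.  Hence $\hF = \mathcal{M}_f$.

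The hard part is the middle step: locating $A$ via codegree analysis.  Where the $k=2$ argument of Theorem~\ref{thm:simple2} only requires degree inequalities at the two endpoints of a disjoint pair, here one must simultaneously control all $15$ codegrees of pairs in the $6$-vertex set $V_0$, distinguishing the six ``core'' pairs from the nine others and ruling out near-extremal configurations where heavy codegrees do not cleanly organize as $\binom{A}{2}$.  The bound $s > 625$ presumably reflects slack in this casework rather than any intrinsic barrier.
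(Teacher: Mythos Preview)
Your outline follows a genuinely different strategy from the paper's, and the gap you flag is substantial. The paper does not fix a disjoint pair $e_1, e_2$ and analyse codegrees within $V_0 = e_1 \cup e_2$. Instead it runs a three-case analysis: after ruling out three pairwise disjoint edges (your use of Lemma~\ref{lem:nokdisjoint}), it devotes a separate argument (Case~2) to excluding the configuration where some edge $e$ is disjoint from two edges $f_1,f_2$ with $|f_1\cap f_2|=1$. Only then, knowing that any two edges disjoint from a common edge share two vertices, does it build an auxiliary graph $G$ on the \emph{entire} vertex set whose edges are the high-codegree pairs; structural constraints on $G$ (no three independent edges, $\Delta(G)\le 3$, at most one $G$-edge disjoint from any given $G$-edge) force $E(G)$ into a $4$-set $S$, which plays the role of your $A$.

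Your codegree-sum approach within $V_0$ does not obviously recover this. The identity and the type count give a lower bound of only about $4s$ on $\sum_{\{u,v\}\subset V_0} c(\{u,v\})$, while the per-pair cap $s+3$ over fifteen pairs gives roughly $15s$: far too much slack to force six heavy pairs, let alone to force them into the shape $\binom{A}{2}$ for a single $4$-set. Pushing the sum toward $6s$ would require showing that most of the $\le 2s$ edges disjoint from one of $e_1,e_2$ meet the other in two vertices---precisely the content of the paper's Case~2/Case~3 dichotomy. So the ``hard part'' you identify is not mere casework but a genuinely missing structural ingredient. (A small slip in your endgame: an edge contained in the $4$-set $A$ has a one-element complement, not a ``complementary pair''; the correct reason it is excluded is that it meets every pair in $\binom{A}{2}$ and hence every edge of $\hF$, contradicting $\delta(\disj{\hF})\ge 1$.)
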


\begin{proof}
Let $\hF$ be any $[1,s]$-almost intersecting 3-uniform hypergraph with $m\ge 6s+6$ edges.  
Let $X$ be the vertex set of $\hF$, and let $F=\disj{\hF}$ be the disjointness graph of $\hF$.  
As in the proof of Theorem \ref{thm:simple2}, for $e\in \hF$ and $x\in X$, we shall write $d_F(e)$ for
 the degree  in $F$ of $e$ and  $d(x)$ for the 
degree in $\hF$ of $x$.  
Note that $\delta(F)\ge1$: we will carry out most of the proof under the weak assumption that
for every edge $e$, and vertices $x,y\in e$, there is an edge disjoint from $\{x,y\}$,
and only use the assumption that $\delta(F)\ge1$ when we need it. We note that setting $s>625$ is sufficiently large for all steps below. 

We begin with a useful fact: suppose that some pair of vertices $x$ and $y$ have $t\ge1$ common edges.  There must be some edge $e$ disjoint from $\{x,y\}$; since $e$ is disjoint from all but at most $3$ edges incident with $x$ and $y$, we must have $t\le s+3$.

We now break into three cases, according to the structure of $\hF$. 

\medskip

\noindent {\em Case 1: $\hF$ contains three pairwise disjoint edges, say $e_1,e_2,e_3$.}

There are at most 27 edges meeting all three of these edges, and every other edge is disjoint from at least 1.  It follows that
$d_F(e_1)+d_F(e_2)+d_F(e_3)\ge m-27 \ge 6s-21>3s$, since $s$ is sufficiently large.  This contradicts $\Delta(F)\le s$, so we conclude that $\hF$ does not contain three pairwise disjoint edges.
\medskip

\noindent {\em Case 2: There are edges $e$, $f_1$, $f_2$ such that $e$ is disjoint from $f_1$ and $f_2$, and $|f_1\cap f_2|=1$.}

Suppose $f_1\cap f_2=\{y\}$.  
There are at most $s$ edges disjoint from each of $e$, $f_1$, $f_2$, and so at least
$(6s+6)-3s=3s+6$ edges meet all of $e$, $f_1$, $f_2$.  At most 12 edges meet all of 
$e$, $f_1$, $f_2$ and miss $y$, and so at least $(3s+6)-12=3s-6$ edges must meet $e$ and $y$.
At most 3 edges contain $y$ and meet $e$ in two vertices and so at least $3s-9$ edges contain $y$ and meet $e$ in exactly one vertex. 
Let $e=\{x_1,x_2,x_3\}$, and, for $i=1,2,3$, let 
$$E_i=\{f\in\hF: f\cap \{x_1,x_2,x_3,y\} =\{x_i,y\} \}.$$
Since, for each $i$, there are at most $s+3$ edges incident with both $y$ and $x_i$, it follows that 
$|E_i|\ge (3s-9)-2(s+3)=s-15$.

Now any edge that misses both $y$ and $x_i$ must be disjoint from at least $(s-15)-3=s-18$ edges from $E_i$.  It follows that if $f$ misses $y$ then $|f\cap e|\ge 2$, since otherwise $f$ would be disjoint from at least $2(s-18)=2s-36>s$ edges from $E_1\cup E_2\cup E_3$. 
Since there are at most $s$ edges disjoint from $e$, and at most 
$3(s+3)=3s+9$ edges incident with both $e$ and $y$, it follows that there are at least $(6s+6)-s-(3s+9)=2s-3$
edges that meet $e$ and miss $y$.  Thus there are at least $2s-3$ edges that meet $e$ in exactly two vertices.

Finally, consider $f_1$.  There are at most 9 edges that meet $e$ in two vertices and also meet $f_1$. But then at least $(2s-3)-9=2s-12$ edges that meet $e$ in two vertices must miss $f_1$, which gives a contradiction.

\medskip

\noindent {\em Case 3: For every edge $e$, and every pair of edges $f_1$, $f_2$ that are disjoint from $e$, $|f_1\cap f_2|=2$.}

Set $K=11$ and let $G$ be the graph with vertex set $X$ and $xy\in E(G)$ if there are at least $K$ edges from $\hF$ that contain both $x$ and $y$.  For each edge $xy$ of $G$, choose a set $E_{xy}$ of $K$ edges from $\hF$ that contain $x$ and $y$. 

It will be useful to note a relationship between edges of $G$ and $\hF$.  If $xy$ is an edge of $G$, then every edge that is disjoint from $\{x,y\}$ meets at most 3 edges from $E_{xy}$ and so is disjoint from at least $K-3$ edges from $E_{xy}$; so if there are $t$ edges of $\hF$ disjoint from $\{x,y\}$ then $t(K-3)\le \sum_{e\in E_{xy}}d_F(e) \le sK$ and so $t\le s K/(K-3) = (11/8) s$.

We now consider the structure of $G$:

\begin{itemize}
\item 
Note first that $G$ does not contain three independent edges, or else $\hF$ would contain 3 independent edges (we can pick these greedily). 

\item 
Next note that, for any edge $x_1y_1$ of $G$, there is at most one edge of $G$ disjoint from $x_1y_1$.  For suppose $x_2y_2$ and $x_2y_3$ are disjoint from $x_1y_1$.  
Since $K=11$, we can greedily extend all three edges to $f_1,f_2,f_3 \in \hF$ such that $f_2 \cap f_3 = \{x_2\}$ and $f_1$ is disjoint from $f_2 \cup f_3$. But we have already ruled this out in Case 2. 

\item
We next show that $\Delta(G)\le 3$.  Suppose that $xy_i$, $i=1,\dots,4$ are 4 edges of $G$.  Note that every edge of $G$ must contain $x$, since any edge not containing $x$ would be disjoint from at least two edges of form $xy_i$ (which we have just shown does not happen).  Let $f=\{z_1,z_2,z_3\}$ be any edge of $\hF$ that does not contain $x$.  For each $i$, there are at most $s+3$ edges containing both $x$ and $z_i$; and there are at most $s$ edges of $\hF$ disjoint from $f$, so there are at least
$(6s+6)-3(s+3)-s=2s-3$ edges of $\hF$ that do not contain $x$.  There are at most $6(K-1)$ edges of $\hF$ that contain at least two vertices from $\{y_1,y_2,y_3,y_4\}$ (since $G$ does not have any edges among this set), and so there are at least $2s-6K+3$ edges of $\hF$ that miss $x$ and meet  $\{y_1,y_2,y_3,y_4\}$ in at most 1 vertex.  It follows that some 
edge $\{x,y_i\}$ of $G$  is disjoint from at least 
$(3/4)(2s-6K+3)$ edges of $\hF$.  But $(3/4)(2s-6K+3) > sK/(K-3)$,
giving a contradiction.  We conclude that $\Delta(G)\le 3$.
\end{itemize}
It follows from the facts above that all edges of $G$ are contained in some set $S\subset X$ of size 4. We have not yet shown that $G$ has any edges: if $e(G)=0$, choose $S$ to be any 4-set containing an edge of $\hF$.

We next consider how the edges of $\hF$ intersect $S$.
\begin{itemize}
\item If $S$ contains an edge of $G$ there are at most $s K/(K-3)$ edges of $\hF$ disjoint from $S$; otherwise, 
$S$ contains an edge of $\hF$, so there are at most $s$ edges disjoint from $S$.

\item Suppose that at least $36K$ edges of $\hF$ meet $S$ in exactly one vertex.  Then at least $9K$ of these edges contain the same vertex $x\in S$: let $E'$ be such a set of $9K$ edges.  Now consider the graph $H$ with vertex set $X-S$ and edges $\{e\setminus \{x\}: e\in E'\}$.  $H$ has at least $9K$ edges, and $\Delta(H)\le K-1$ (or we would have an edge of $G$ from $x$ to a vertex outside $S$).  By choosing greedily, we can find a matching of size 5 in $H$; let $f_1,\dots,f_5$ be the corresponding edges of $\hF$.  But now let $e$ be any edge of $\hF$ that does not contain $x$: $e$ meets at most 3 of the $f_i$, so there are two others that are disjoint from $e$ and meet in one vertex, which is a configuration that we have already excluded.  We conclude that at most $36K$ edges of $\hF$ meet $S$ in exactly one vertex.
\item At most 4 edges of $\hF$ are contained in $S$.
\end{itemize}
It follows that at least $(6s+6)- s K/(K-3) - 36K -4>s+18$ (since $s$ is large enough) 
edges of $\hF$ meet $S$ in exactly 2 vertices.  If any edge of $\hF$ is disjoint from $S$ then it can meet at most 18 of these edges, giving a contradiction.  We deduce that no edges of $\hF$ are disjoint from $S$, and so at least $(6s+6)-36K-4=6s-36K+2$ edges of $\hF$ meet $S$ in exactly two vertices.  Since no pair of vertices in $S$ belongs to more than $s+3$ edges, it follows that every pair of vertices in $S$ belongs to at least $(6s-36K+2)-5(s+3)=s-36K-13$ edges.  Now if any edge of $\hF$ meets $S$ in only one vertex, it is disjoint from at least $s-36K-16$ edges incident with any pair of vertices in $S$ that it does not meet; there are 3 such pairs, giving a contradiction.
We conclude that every edge meets $S$ in at least 2 vertices.

Finally, let $S=\{s_1,s_2,s_3,s_4\}$.  If $\hF$ is $[1,s]$-almost intersecting, then no edge of $\hF$ is contained in $S$ (or it would meet every other edge).  No pair $\{s_i,s_j\}$ belongs to more than $s+1$ edges, or any edge containing $S\setminus\{s_i,s_j\}$ would be disjoint from more than $s$ of these edges.  Thus if $|\hF|\ge 6s+6$ then every pair from $S$ is incident with exactly $s+1$ edges, and so every edge incident with $\{s_i,s_j\}$ must meet an edge incident with $S\setminus\{s_i,s_j\}$.  It follows immediately that $\hF$ is an extremal system from the family described in the theorem.
\end{proof}

\begin{theorem}
\label{thm:simple3a} 
Let $s>625$, and suppose that $\hF$ is an $[0,s]$-almost intersecting 3-uniform hypergraph.
If for every edge $e\in \hF$, and vertices $x,y\in e$, there is an edge of $\hF$ disjoint from $\{x,y\}$,
then $\hF$ has at most $6s+10$ edges.   The only hypergraphs achieving this bound 
are given by adding up to 4 edges entirely within the $4$-vertex ``core'' of the extremal hypergraphs of Theorem~\ref{thm:simple3}.
\end{theorem}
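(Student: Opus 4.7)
The plan is to re-use the proof of Theorem~\ref{thm:simple3} almost verbatim. Observe that Theorem~\ref{thm:simple3} is explicitly written under the weak hypothesis ``for every edge $e$ and vertices $x,y\in e$ there is an edge disjoint from $\{x,y\}$'', and this is exactly the hypothesis of Theorem~\ref{thm:simple3a}. The full $[1,s]$ condition ($\delta(F)\ge1$) is invoked only at the very last stage, when ruling out edges contained in the $4$-vertex set $S$; every earlier structural step uses only the weaker hypothesis.

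More precisely, I would run the Case 1--Case 3 analysis as in Theorem~\ref{thm:simple3} (permitting $m\ge 6s+6$, which will be satisfied since $m\ge 6s+7$ under our hypothesis). The only threshold-dependent constants to recheck are the numerical comparisons in Cases 1--3, which all hold for $s>625$ with a margin large enough to absorb the extra additive constants $+4$ coming from the new edges in $S$. This yields a $4$-vertex set $S=\{s_1,s_2,s_3,s_4\}$ such that every edge of $\hF$ meets $S$ in at least $2$ vertices. At this point the proofs of Theorem~\ref{thm:simple3} and Theorem~\ref{thm:simple3a} diverge: under the weaker hypothesis we must allow edges contained entirely in $S$, of which there are at most $\binom{4}{3}=4$.

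Now count. Write $\hF=\hF_2\cup\hF_3$, where $\hF_2$ is the set of edges meeting $S$ in exactly two vertices and $\hF_3\subseteq\binom{S}{3}$. For any pair $\{s_i,s_j\}\subset S$, the argument from the end of the proof of Theorem~\ref{thm:simple3} applies unchanged: there are many edges of $\hF_2$ containing the complementary pair $S\setminus\{s_i,s_j\}$, so if more than $s+1$ edges of $\hF_2$ contained $\{s_i,s_j\}$, some edge $\{s_k,s_l,y\}$ with $y\notin S$ would be disjoint from more than $s$ of them. Hence every pair in $\binom{S}{2}$ lies in at most $s+1$ edges of $\hF_2$, giving $|\hF_2|\le 6(s+1)=6s+6$, and so $|\hF|\le 6s+10$.

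For the equality case, $|\hF|=6s+10$ forces $|\hF_3|=4$ and every pair in $\binom{S}{2}$ to lie in exactly $s+1$ edges of $\hF_2$. The matching/extremality argument at the very end of Theorem~\ref{thm:simple3}'s proof (``every edge incident with $\{s_i,s_j\}$ must meet an edge incident with $S\setminus\{s_i,s_j\}$'') then identifies $\hF_2$ as an $\mathcal{M}_f$ from Theorem~\ref{thm:simple3}, and one checks directly that adjoining all four edges of $\binom{S}{3}$ preserves the $[0,s]$-almost intersecting property (each added edge meets every other edge of $\hF$ in at least one vertex and so has degree $0$ in the disjointness graph, while also being disjoint from the required edges witnessing the weak hypothesis, e.g.\ an $\mathcal{M}_f$-edge carrying the complementary pair). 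The main obstacle is bookkeeping: one must verify that every constant comparison in the Case 1--3 arguments still works with the slightly weaker inequality $m\ge 6s+7$ in place of $m\ge 6s+6$, and that $\delta(F)\ge1$ is genuinely used nowhere else in Theorem~\ref{thm:simple3}'s proof. Both checks are straightforward because the constants in that proof leave ample slack for $s>625$.
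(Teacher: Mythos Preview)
Your proposal is correct and follows the same route as the paper's own one-sentence proof: ``Follow the proof of Theorem~\ref{thm:simple3} through the final paragraph, and then delete all edges that lie entirely inside $S$.'' Your parenthetical that ``$m\ge 6s+7$ under our hypothesis'' is a slip (the hypothesis imposes no lower bound on $m$; one simply assumes $m\ge 6s+6$ and runs the Case~1--3 analysis on $\hF$ itself, so there is no ``$+4$'' to absorb), but this does not affect the argument.
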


\begin{proof}
Follow the proof of Theorem \ref{thm:simple3} through the the final paragraph, and then delete all edges that lie entirely inside $S$.
\end{proof}


\section{Discussion}\label{sec:discuss}

\subsection{}
The results of Section~\ref{sec:reduced} show that for $k=2,3$ we can take $R=1$ in Theorem~\ref{thm:simple}.  We conjecture that a similar result holds for all $k$.

\begin{conjecture}
\label{thm:simpleconj} Fix $k>2$ and let $s>s_1(k)$ be sufficiently large. Then  any $k$-uniform $[1,s]$-almost intersecting hypergraph has at most $(s+1)\binom{2k-2}{k-1}$ edges. 
\end{conjecture}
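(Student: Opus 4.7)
My plan is to follow the sunflower-plus-core-reduction strategy of Theorem~\ref{thm:simple}, but with additional steps to handle the edges whose disjointness partners escape the core-level analysis when $R=1$. First, by Lemma~\ref{lem:nokdisjoint} (which only requires $\Delta(F)\le s$), we may assume $\hF$ contains no $k$ pairwise disjoint edges. Apply the Erd\H os--Rado sunflower lemma iteratively to decompose $\hF = \mathcal{S}\cup\mathcal{L}$, where $\mathcal{S}$ is a union of sunflowers with $r$ petals each and $|\mathcal{L}| < k!(r-1)^k$, and form the core multihypergraph $\hG$. Call a core \emph{good} if it has at least one disjoint partner in $\hG$, and \emph{bad} otherwise. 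Crucially, every edge in a sunflower around a bad core has all its disjointness partners inside $\mathcal{L}$, so a double-count on the bipartite disjointness graph between bad-sunflower edges and $\mathcal{L}$ bounds the number of bad sunflowers by $s|\mathcal{L}|/r$.

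For the good cores, the sequence of Claims~\ref{cl:sizeofG}--\ref{cl:bipartites} should go through essentially as before, pinning down a set $S\subseteq X$ of size $2k-2$ such that every good-sunflower edge meets $S$ in exactly $k-1$ elements. Once $S$ is identified, the bound $|\hF|\le (s+1)\binom{2k-2}{k-1}$ follows from a short calculation: for each $C\in\binom{S}{k-1}$, any edge of the form $C\cup\{x\}$ is disjoint from every edge of the form $(S\setminus C)\cup\{y\}$ with $y\ne x$, so there are at most $s+1$ edges with ``core'' $C$; summing over $C$ gives the bound, with equality forcing the ${\mathcal M}_f$ structure. The key reduction needed is that \emph{every} edge of $\hF$, not merely those in good sunflowers, meets $S$ in exactly $k-1$ elements---an analogue of Claim~\ref{cl:blob}.

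The main obstacle is verifying this $S$-localization for bad-sunflower edges and for edges in $\mathcal{L}$. Heuristically, an edge $e$ with $|e\cap S|\le k-2$ is disjoint from $\binom{|S\setminus e|}{k-1}\ge k$ cores' worth of edges, hence from roughly $k(s+1)\gg s$ edges of $\hF$, a contradiction; the difficulty with $R=1$ is quantifying the number of ``exceptional'' edges that lie outside the good-sunflower structure and ensuring the $S$-skeleton from the good part is still dense enough to support this count. I expect this will require either an iterative refinement---re-analyze the bad sunflowers in light of the candidate $S$, shrink their count, and repeat---or a case analysis in the spirit of Theorems~\ref{thm:simple2} and~\ref{thm:simple3}. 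The latter becomes substantially more intricate as $k$ grows, which is presumably why the conjecture remains open in general.
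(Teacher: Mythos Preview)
This statement is Conjecture~\ref{thm:simpleconj} in the paper: it is explicitly left open, with only the cases $k=2,3$ settled (Theorems~\ref{thm:simple2} and~\ref{thm:simple3}). There is therefore no proof in the paper to compare your proposal against, and your closing remark that ``the conjecture remains open in general'' is accurate.

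That said, your outline contains a quantitative gap worth making explicit. Your double-count yields at most $s|\mathcal{L}|/r$ bad sunflowers, but $|\mathcal{L}|$ can be as large as $k!(r-1)^k$, so this bound is of order $s\,k!\,r^{k-1}$. Meanwhile the total number of cores $|\hG|$ is only of order $s\binom{2k-2}{k-1}/r$ (from $|\hF|\le r|\hG|+|\mathcal{L}|$). For any $r\ge 2$ and $k\ge 3$ the bad-sunflower bound already exceeds $|\hG|$, so you cannot conclude that the good part of $\hG$ is large enough to feed into Claims~\ref{cl:sizeofG}--\ref{cl:bipartites}; those claims require the good cores alone to have size at least $\bigl(\binom{2k-2}{k-1}-\epsilon\bigr)t$. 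In Theorem~\ref{thm:simple} this issue vanishes precisely because $\delta(F)\ge R>|\mathcal{L}|$ forces \emph{every} core to be good. With $\delta(F)\ge 1$ the leftover set $\mathcal{L}$ can in principle absorb all of the required disjointness for a large fraction of the sunflowers, and your sketch provides no mechanism to rule this out. The paper's proofs for $k=2,3$ avoid the sunflower reduction entirely and argue directly by case analysis on the intersection pattern of a few disjoint edges; as you observe, scaling that style of argument to general $k$ is where the genuine difficulty lies.
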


It seems likely that the only hypergraphs achieving this bound are of the form $\mathcal{M}_f$ for some function $f$, as described in~\eqref{eq:Mdef}. 

\subsection{}
Another way to weaken the conditions of Theorem~\ref{thm:simple} is to drop the condition that every edge needs to be disjoint from some other edge.  Of course, the system can then have unbounded size, as we could take a large star.  However, if we demand only that {\em some} pair of edges is disjoint then, for $k=2,3$, Theorems~\ref{thm:simple2a} and~\ref{thm:simple3a} again determine the hypergraphs of maximal size, which depends only on $k$.

One natural way of expressing the hypothesis of Theorem \ref{thm:simple2a} is to say that no vertex meets all edges.  We might hope for an extension to $k$-uniform hypergraphs for $k>2$, by looking at hypergraphs in which no set of $t$ vertices meets all edges.  In this case, $t=k-2$ is not enough as we can fix a set $S$ of size $2k-3$ and take all edges that meet $S$ in exactly $k-1$ vertices: the resulting hypergraph is intersecting and has unbounded size.  On the other hand, $t=k$ is equivalent to the condition that every edge is disjoint from some other edge, which takes us back to Conjecture~\ref{thm:simpleconj}!  So the only other interesting case is $t=k-1$, for which we conjecture the following.

\begin{problem} 
What is the maximum size of a $k$-uniform,
$[0,s]$-almost intersecting hypergraph $\hF$  in which no set of $k-1$ vertices meets all the edges?  What do the extremal hypergraphs look like?  
\end{problem}

It is easy to see that the size of $\hF$  is bounded (as pointed out to us by Alexey Pokrovskiy \cite{APpers}): 
$\hF$ cannot contain a sunflower with $k+s+1$ edges (as the core of the sunflower would then have to meet every edge),
so the number of edges is bounded by the Sunflower Lemma.

We conjecture that
an extremal example can be obtained by filling in the $(2k-2)$-set at the centre of a hypergraph
of form $\mathcal{M}_f$, giving hypergraphs of size
$$(s+1)\binom{2k-2}{k-1} + \binom{2k-2}{k}.$$
We conjecture that the same bound should hold under the weaker
condition that no edge contains a set of $k-1$ vertices meeting all the edges.
The Sunflower Lemma again gives an upper bound, while Theorems~\ref{thm:simple2a}
and \ref{thm:simple3a} confirm the conjecture in the special cases $k=2,3$.

\subsection{}
Another improvement to Theorem~\ref{thm:simple} would be to bring down the value of $s_0$.  We have not optimized the constants in the proof, but (due to the use of the Sunflower Lemma) the proof gives a bound of order $2^{O(k^2)}$.  It is likely that this is far from the truth.  Can $s_0$ be brought down to a polynomial in $k$?

From below, a few examples show that we cannot hope to bring $s$ all the way down to 1. For $k=2$, \cite{GLPPS12} found several small graphs that are $s$-almost intersecting, but have more than $2s+2$ edges. We note that the complete hypergraph $\binom{[7]}{3}$ is $4$-almost intersecting and has $35 > (4+1)\binom{4}{2}=30$ edges.  Similarly, the complete hypergraph~$\binom{[9]}{4}$ is $5$-almost intersecting with $126 > (5+1)\binom{6}{3}=120$ edges. For all $k\geq 2$, the complete hypergraph $\binom{[2k]}{k}$ is 1-almost intersecting and has more than $2\binom{2k-2}{k-1}$ edges, so we need $s_0>1$. But we cannot even rule out the possibility that we can take $s_0$ to be some constant (independent of $k$).

\subsection{}
Finally, we mention that the problems above have all concerned almost intersecting hypergraphs and multihypergraphs.  Let us define $\hF$ to be \textit{$[a,b]$-almost $t$-intersecting} if for all $A \in \hF$
\begin{equation}
a \leq \left|\{B \in \hF : |A \cap B| < t\}\right| \leq b. \notag
\end{equation}
It would be natural to try to extend the results (and questions) to almost $t$-intersecting hypergraphs for $t\ge2$.

\begin{bibdiv}
\begin{biblist}

\bib{A80}{article} {
    AUTHOR = {Ahlswede, Rudolf},
     TITLE = {Simple hypergraphs with maximal number of adjacent pairs of
              edges},
   JOURNAL = {J. Combin. Theory Ser. B},
    VOLUME = {28},
      YEAR = {1980},
    NUMBER = {2},
     PAGES = {164--167},
      ISSN = {0095-8956},
       URL = {http://dx.doi.org/10.1016/0095-8956(80)90062-3},
}

\bib{AK78}{article} {
    AUTHOR = {Ahlswede, R.},
    author = {Katona, G. O. H.},
     TITLE = {Graphs with maximal number of adjacent pairs of edges},
   JOURNAL = {Acta Math. Acad. Sci. Hungar.},
    VOLUME = {32},
      YEAR = {1978},
    NUMBER = {1-2},
     PAGES = {97--120},
      ISSN = {0001-5954},
       URL = {http://dx.doi.org/10.1007/BF01902206},
}

\bib{Bollobas65}{article}{
   author={Bollob{\'a}s, B.},
   title={On generalized graphs},
   language={English, with Russian summary},
   journal={Acta Math. Acad. Sci. Hungar},
   volume={16},
   date={1965},
   pages={447--452},
}

\bib{BL03}{article} {
    AUTHOR = {Bollob{\'a}s, B{\'e}la},
    author = {Leader, Imre},
     TITLE = {Set systems with few disjoint pairs},
   JOURNAL = {Combinatorica},
    VOLUME = {23},
      YEAR = {2003},
    NUMBER = {4},
     PAGES = {559--570},
      ISSN = {0209-9683},
       URL = {http://dx.doi.org/10.1007/s00493-003-0033-0},
}

\bib{DGS}{article}{
    AUTHOR = {Das, S.},
    AUTHOR = {Gan, W.},
    AUTHOR = {Sudakov, B.},
    TITLE = {The minimum number of disjoint pairs in set systems and related
problems},
    journal = {arXiv:1305.6715}
}

\bib{EKR}{article} {
   author={Erd{\H{o}}s, P.},
   author={Ko, C.},
   author={Rado, R.},
     TITLE = {Intersection theorems for systems of finite sets},
   JOURNAL = {Quart. J. Math. Oxford Ser. (2)},
    VOLUME = {12},
      YEAR = {1961},
     PAGES = {313--320},
      ISSN = {0033-5606},
}

\bib{ERsunflower}{article}{
   author={Erd{\H{o}}s, P.},
   author={Rado, R.},
   title={Intersection theorems for systems of sets},
   journal={J. London Math. Soc.},
   volume={35},
   date={1960},
   pages={85--90},
}

\bib{F77}{article}{
    AUTHOR = {Frankl, P.},
     TITLE = {On the minimum number of disjoint pairs in a family of finite
              sets},
   JOURNAL = {J. Combinatorial Theory Ser. A},
    VOLUME = {22},
      YEAR = {1977},
    NUMBER = {2},
     PAGES = {249--251},
}

\bib{F82}{article} {
    AUTHOR = {Frankl, P.},
     TITLE = {An extremal problem for two families of sets},
   JOURNAL = {European J. Combin.},
    VOLUME = {3},
      YEAR = {1982},
    NUMBER = {2},
     PAGES = {125--127},
      ISSN = {0195-6698},
}

\bib{GLPPPS13} {article}{
    AUTHOR = {Gerbner, D{\'a}niel},
   author={Lemons, Nathan},
   author={Palmer, Cory},
   author={P{\'a}lv{\"o}lgyi, D{\"o}m{\"o}t{\"o}r},
   author={Patk{\'o}s, Bal{\'a}zs},
   author={Sz{\'e}csi, Vajk},
     TITLE = {Almost {C}ross-{I}ntersecting and {A}lmost {C}ross-{S}perner
              {P}airs of {F}amilies of {S}ets},
   JOURNAL = {Graphs Combin.},
    VOLUME = {29},
      YEAR = {2013},
    NUMBER = {3},
     PAGES = {489--498},
      ISSN = {0911-0119},
       URL = {http://dx.doi.org/10.1007/s00373-012-1138-2},
}

\bib{GLPPS12}{article}{
   author={Gerbner, D{\'a}niel},
   author={Lemons, Nathan},
   author={Palmer, Cory},
   author={Patk{\'o}s, Bal{\'a}zs},
   author={Sz{\'e}csi, Vajk},
   title={Almost intersecting families of sets},
   journal={SIAM J. Discrete Math.},
   volume={26},
   date={2012},
   number={4},
   pages={1657--1669},
}

\bib{K84}{article}{
    AUTHOR = {Kalai, Gil},
     TITLE = {Weakly saturated graphs are rigid},
 BOOKTITLE = {Convexity and graph theory ({J}erusalem, 1981)},
    SERIES = {North-Holland Math. Stud.},
    VOLUME = {87},
     PAGES = {189--190},
 PUBLISHER = {North-Holland},
   ADDRESS = {Amsterdam},
      YEAR = {1984},
       URL = {http://dx.doi.org/10.1016/S0304-0208(08)72824-X},
}

\bib{L77}{article}{
    AUTHOR = {Lov{\'a}sz, L.},
     TITLE = {Flats in matroids and geometric graphs},
 BOOKTITLE = {Combinatorial surveys ({P}roc. {S}ixth {B}ritish
              {C}ombinatorial {C}onf., {R}oyal {H}olloway {C}oll., {E}gham,
              1977)},
     PAGES = {45--86},
 PUBLISHER = {Academic Press},
   ADDRESS = {London},
      YEAR = {1977},
}

\bib{APpers}{article}{
    AUTHOR = {Alexey Pokrovskiy},
     TITLE = {personal communication},
} 

\end{biblist}
\end{bibdiv}

\end{document}